\newtheorem{theorem}{Theorem}[section]
\newtheorem{lemma}[theorem]{Lemma}
\newtheorem{proposition}[theorem]{Proposition}
\newtheorem{corollary}[theorem]{Corollary}
\theoremstyle{definition}
\newtheorem{definition}[theorem]{Definition}
\newtheorem{example}[theorem]{Example}
\newcommand{\ggl}{{\rm Gl}(d,\mathbb{R})}
\newcommand{\ww}[1]{\widehat{#1}}
\newcommand{\oo}{\mathcal{O}}
\newcommand{\intr}{\rm int}
\title{Existence of eigensets on bilinear control systems.}
\author{Eduardo Celso Viscovini\thanks{Departamento de Matem\'{a}tica, Universidade Estadual de Maring\'{a}. \textit{%
e-mail: eduardocelsoviscovini@gmail.com }. Supported by CAPES grant n.
88887.669835/2022-00}}
\begin{document}

\maketitle

\begin{abstract}
    For bilinear control systems in $\mathbb{R}^d$ we prove, under an accessibility hypothesis, the existence of a nontrivial compact set $D\subset\mathbb{R}^d$ satisfying $\oo_t(D)=e^{tR}D$ for all $t>0$, where $R\in\mathbb{R}$ is a fixed constant and $\oo_t(D)$ denotes the orbit from $D$ at time $t$. This property generalizes the trajectory of an eigenvector on a linear dynamical system, and merits such a set the name "eigenset".
\end{abstract}

\section{Introduction}
Consider the bilinear control system $\Sigma$ on $\mathbb{R}^d$ defined by
\begin{equation}
\label{systemEquation}
    \dot x=Ax+\sum_{i=1}^mu_iB_ix,
\end{equation}
where $A,B_1,B_2,...,B_m\in M_{d\times d}(\mathbb{R})$, $U\subset\mathbb{R}^m$ is a compact, convex set, and $u=(u_1,u_2,...,u_m)\in\mathcal{U}=\{f:\mathbb{R}\rightarrow U;\text{ locally integrable}\}$.

This system induces a system $\ww{\Sigma}$ on the unit sphere through projection (this construction is described in more details in Section \ref{preliminaries}). In this paper we prove that if $\ww{\Sigma}$ is accessible then there is a nontrivial compact set $D\subset\mathbb{R}^d$ with the property
\begin{equation}
    \label{eigenvectorLikeProperty}
    \oo_t(D)=e^{tR}D
\end{equation}
for all $t>0$, where $R\in\mathbb{R}$ is a fixed value and $\oo_t(D)$ denotes the set of points that can be reached from $D$ at time exactly $t$ using trajectories from the original system $\Sigma$. If $\Sigma$ is accessible then so is $\ww{\Sigma}$, thus this result includes any bilinear system that is accessible in $\mathbb{R}^d$.

Bilinear control systems have been broadly studied in the theory of control systems, having the simplest definition after the linear control systems. Notably, their controllability remains an open question despite this simplicity. Nonetheless, several partial results exist. Kučera \cite{Kučera} proved that if $A=0$ and $0\in \intr(U)$ then the system $\Sigma$ is controllable if, and only if, the Lie group generated by exponentials of the form $e^{tB_i};i\in\{1,2,...,m\},t\in\mathbb{R}$ is transitive in $\mathbb{R}^d$, and Boothby \cite{Boothby} characterized the connected Lie subgroups of $\ggl$ with this property. Barros et al \cite{BarrosFilhoDoRocíoAndSanMartin} gave necessary and sufficient conditions for the controllability of bilinear control systems in $\mathbb{R}^2$. Jurdjevic and Kupka \cite{JurdjevicKupka} and Sachkov \cite{Sachkov} show several results on controllability using the Lie saturate. Another direction that has yielded remarkable results is the study of invariant control sets on the flag manifolds of semisimple Lie groups, in special the flag type of the system semigroup (see San Martin \cite{SanMartin} and \cite{SanMartin2}). If the system's group coincides with $\rm{Sl}(n,\mathbb{R})$ (or, equivalently, if the Lie algebra generated by $A,B_1,B_2,...,B_m$ coincides with $\rm{sl}(n,\mathbb{R})$), do Rocio et al. \cite{doRocioSanMartinSantana} showed that the existence of invariant convex cones and invariant convex sets for the bilinear control system depends only on the flag type of its semigroup, and in \cite{doRocioSanMartinSantana2} they extend these results to other transitive groups. For more on bilinear control systems and on control theory in general, refer to \cite{dynamicsOfControl,Elliot,Sachkov} and references therein.

The results in this paper move on a new direction. Eigenvectors have been widely successful in describing linear transformations, and, by extension, linear dynamical systems. Recall that a linear endomorphism $M\in M_{d\times d}(\mathbb{R})$ induces a dynamical system by
$$\dot x(t)=Mx(t),$$
and the respective flow is given by $\varphi:\mathbb{R}\times\mathbb{R}^d\rightarrow\mathbb{R}^d,\varphi(t,x)=e^{tM}x$. If $x\in\mathbb{R}^d$ is nonzero, then $x$ is an $M$-eigenvector if, and only if, there is $r\in\mathbb{R}$ such that 
$$\varphi(t,x)=e^{tr}x$$
for all $t\in\mathbb{R}$. Thus, $x$ is an Eigenvector if, for all $t\in\mathbb{R}$, the point reached at time $t$ from the initial condition $x(0)=x$ is a multiple of $x$.

In the case of a bilinear control system, it doesn't make sense to ask this same property for a point, as the orbit of a point $x$ at time $t$ under the control system becomes a set. Instead,
the property \ref{eigenvectorLikeProperty} appears as a generalization of this concept.

This paper is structured as follows. In section \ref{preliminaries} we recall some known results that will be needed on the next sections and give the formal definition of eigenset to be used in the paper. Section \ref{mainResults} contains the main results of this paper, and is divided in three subsections. Section \ref{theFunctionXi} defines the function $\xi$ and shows some of its properties. This function will be used on the later sections. Section \ref{sectionParticularCase} proves the main result for a particular case, and Section \ref{generalCase} extends the result for the general case.

\section{Preliminaries}
\label{preliminaries}
Consider the bilinear control system $\Sigma$ defined as in the previous section. We denote the solution (in the Carathéodory sense) of $\Sigma$ with initial condition $x$, control $u$ and time $t$ by $\varphi(t,x,u)$, or $\varphi_u^t(x)$ when convenient. The set $\mathcal{U}$ can be endowed with the weak* topology of $L_\infty(\mathbb{R},\mathbb{R}^m)=L_1(\mathbb{R},\mathbb{R}^m)^*$. Using this topology, one has that the solution $\varphi$ is continuous $\mathbb{R}\times\mathbb{R}^d\times\mathcal{U}$, and the hypothesis that $U$ is convex and compact implies that $\mathcal{U}$ is metrizable and compact (see \cite[Proposition 4.1.1]{dynamicsOfControl}).

The solution is linear on the variable $x$, that is,
$$\varphi(t,\alpha x+y,u)=\alpha\varphi(t,x,u)+\varphi(t,y,u)$$
for all $\alpha\in\mathbb{R}$ and $x,y\in\mathbb{R}^d$. Equivalently, $\varphi_u^t\in{\rm Gl}(d,\mathbb{R})$ for all $u\in\mathcal{U}$ and $t\in\mathbb{R}$. A consequence of this is that $\Sigma$ projects into a system in the sphere, defined by
$$\widehat\varphi(t,\pi(x),u)=\pi(\varphi(t,x,u))$$
where $\pi$ denotes the projection
$$\pi:\mathbb{R}^d\setminus\{0\}\rightarrow S^{d-1}$$
$$x\rightarrow \pi(x)= \frac{x}{\Vert x\Vert}.$$
We will denote this system by $\widehat{\Sigma}$, and use the notation $\widehat{x}=\pi(x)$ when convenient.

For $t\in\mathbb{R}$ and $x\in\mathbb{R}^d$ denote
$$\oo_t(x)=\{\varphi(x,u,t);u\in\mathcal{U}\}$$
the orbit of $x$ at time $t$. For $I\subset\mathbb{R}$ denote
$$\oo_I(x)=\bigcup_{t\in I}\oo_t,$$
and
$$\oo^+(x)=\oo_{[0,+\infty)}(x),$$
$$\oo^-(x)=\oo_{(-\infty,0]}(x).$$
One can also define the orbit at time $t$ of a set $C\subset\mathbb{R}$:
$$\oo_t(C)=\bigcup_{x\in C}\oo_t(x),$$
and $\oo_I(C),\oo^+(C),\oo^-(C)$ are defined similarly. A set $C$ is said to be invariant if it satisfies $\oo^+(C)\subset C$. We use these same notations for the system $\ww{\Sigma}$, but with $\widehat\oo$.

For $x,y\in\mathbb{R}^d$ and $t,s\in\mathbb{R}$ recall that $x\in\oo_t(y)$ if, and only if, $y\in\oo_{-t}(x)$. Furthemore, if $ts>0$ then $\oo_t(\oo_s(x))=\oo_{t+s}(x)$. These same properties also hold for $\widehat\oo$.

Throughout the paper we will assume that the system $\ww{\Sigma}$ is accessible, that is, for all $x\in S^{d-1}$ both $\ww{\oo}^+(\ww{x})$ and $\ww{\oo}^-(\ww{x})$ have nonempty interior. The results presented here apply, in particular, to the case when $\Sigma$ is accessible, as accessibility of $\Sigma$ implies accessibility of $\ww{\Sigma}$. $\ww{\Sigma}$ is analytic and, thus, accessibility is equivalent to the Lie algebra rank condition and to local accessibility, meaning that $\oo_{[0,\epsilon]}(x)$ and $\oo_{[-\epsilon,0]}(x)$ have nonempty interior for all $x\in S^{d-1}$ and $\epsilon>0$ (see Sussman and Jurdjevic \cite{SussmanAndJurdjevic}, and \cite[Theorem A.4.6]{dynamicsOfControl}). This yields the following Proposition, which is a direct consequence of Theorem 3.1.5 from \cite{dynamicsOfControl}.

\begin{proposition}
\label{propExistenceOfInvControlSet}
    If $\ww{\Sigma}$ is accessible then for all $\widehat x\in S^{d-1}$ there is at least one invariant control set $C\subset\overline{\widehat\oo^+(\widehat x)}$. $C$ is connected, satisfies $C=\overline{\intr(C)}$, and $\intr(C)\subset\widehat\oo^+(\widehat y)$ for all $\widehat y\in C$.
\end{proposition}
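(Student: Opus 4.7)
The plan is to combine compactness of the state space $S^{d-1}$ with accessibility of $\ww{\Sigma}$ and reduce to the standard machinery of invariant control sets on compact manifolds, which is exactly what the cited Theorem 3.1.5 of \cite{dynamicsOfControl} packages. Concretely, the argument amounts to producing a minimal closed positively invariant set by compactness, and then using local accessibility to promote it to a control set with the stated regularity.

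First I would produce a candidate invariant set by a Zorn argument. The closure $K=\overline{\ww{\oo}^+(\ww{x})}$ is nonempty, closed, and positively invariant (by continuity of $\ww{\varphi}$ in $x$ and $u$, together with the semigroup identity $\ww{\oo}_t\circ\ww{\oo}_s=\ww{\oo}_{t+s}$ for $t,s>0$). Since $S^{d-1}$ is compact, every decreasing chain of nonempty closed positively invariant subsets of $K$ has nonempty intersection, so a minimal such subset $M\subset K$ exists. Minimality forces $\overline{\ww{\oo}^+(\ww{y})}=M$ for every $\ww{y}\in M$, so every forward orbit is dense in $M$.

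Next I would upgrade $M$ to an invariant control set using local accessibility: for every $\ww{y}\in S^{d-1}$ and $\epsilon>0$ both $\ww{\oo}_{[0,\epsilon]}(\ww{y})$ and $\ww{\oo}_{[-\epsilon,0]}(\ww{y})$ have nonempty interior. Combining this with density of forward orbits in $M$ and the compact-metrizable structure of $\mathcal{U}$, I would show that every interior point $\ww{z}$ of $M$ is reachable in positive time from every $\ww{y}\in M$: pick $\ww{w}$ in the open set $\ww{\oo}_{[-\epsilon,0]}(\ww{z})\cap\intr(M)$, approximate it by a point of $\ww{\oo}^+(\ww{y})$ using density, and then close up with a $[0,\epsilon]$-trajectory ending at $\ww{z}$, using continuity of $\ww{\varphi}$ on $\mathcal{U}$ to absorb the approximation error. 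Defining $C:=\{\ww{y}\in M:\intr(M)\subset\ww{\oo}^+(\ww{y})\}$ then yields a nonempty set with $\intr(C)=\intr(M)$ open and dense in $M\supset C$, whence $C=\overline{\intr(C)}$; invariance follows from invariance of $M$, and $\intr(C)\subset\ww{\oo}^+(\ww{y})$ for all $\ww{y}\in C$ holds by construction. Connectedness is inherited from the fact that $C$ is the closure of a single orbit-equivalence class whose interior points are joined by continuous control-parametrized trajectories.

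The genuine technical step, and the main obstacle to a short self-contained write-up, is the passage from topological density (which is automatic from minimality) to \emph{exact} reachability of interior points in finite positive time, since the orbits $\ww{\oo}^+(\ww{y})$ are not closed in general. This is precisely where the Sussmann--Jurdjevic local accessibility is used to ``absorb'' nearby points into the forward orbit, and it is exactly the content of \cite[Theorem 3.1.5]{dynamicsOfControl}. With $S^{d-1}$ compact and $\ww{\Sigma}$ accessible, invoking that theorem closes the argument.
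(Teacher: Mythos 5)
Your proposal is correct and follows the same route as the paper: both reduce the statement to \cite[Theorem 3.1.5]{dynamicsOfControl} by noting that $S^{d-1}$ is compact and invariant and that accessibility of $\ww{\Sigma}$ gives local accessibility. The additional Zorn/minimal-set sketch is a faithful outline of that theorem's internals but is not needed beyond the citation, which is all the paper's proof contains.
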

\begin{proof}
    The set $S^{d-1}$ is compact and invariant for $\widehat{\Sigma}$, and, as mentioned previously, the hypothesis of accessibility for $\ww{\Sigma}$ also implies local accessibility. The Theorem then follows from \cite[Theorem 3.1.5]{dynamicsOfControl}
\end{proof}

In the later sections we will also use the concept of a star set. The usual definition of a star set allows it to have a center in any point in $\mathbb{R}^d$, but, for the purposes of this paper, we restrict ourselves to star sets that are centered in $0$:

\begin{definition}
\label{definitionStarSet}
A set $W\subset\mathbb{R}^d$ is called a star set if for all $w\in W$ and $\alpha\in[0,1]$, $\alpha w\in W$.
\end{definition}

We also introduce the following definition:

\begin{definition}
    Let $D\subset\mathbb{R}^d$ a compact set such that $D\not\subset\{0\}$. We say $D$ is an eigenset for system $\Sigma$ if there is $R\in\mathbb{R}$ such that for all $t>0$
    $$\oo_t(D)=e^{tR}D.$$
\end{definition}

Our goal in this paper is to prove the existence of an eigenset $D$ for $\Sigma$.

\section{Main results}
\label{mainResults}
In this section we present the main results of this paper and the necessary definitions and Lemmas. Section \ref{theFunctionXi} defines the function $\xi$, which will be used in later results. Section \ref{sectionParticularCase} proves the existence of an eigenset on a particular case, and Section \ref{generalCase} extends this result for the general case.

\subsection{The function $\xi$}
\label{theFunctionXi}
We will start by defining the function $\xi$ in $\mathbb{R}^d\setminus \{0\}$, and showing some of its properties. This function plays a central role in the main results.

\begin{definition}
For $x\in\mathbb{R}^d\setminus\{0\}$, define $S_x\subset\mathbb{R}$ as the set of all $\alpha\in\mathbb{R}$ for which there are $u\in\mathcal{U}$ and $t>0$ satisfying
$$\varphi(t,x,u)=e^{t\alpha}x.$$
Define $\xi(x)=\sup(S_x)$, where the supremum is defined as $+\infty$ if the set is not bounded from above (although in Proposition \ref{propBoundedSystem} we will show that this never happens) and $-\infty$ if it is empty.
\end{definition}

Note that $\xi(x)=-\infty$ if and only if $\ww{x}\not\in\widehat\oo_{t>0}(\ww{x})$, that is, if $\ww{x}$ cannot be reached from itself at strictly positive time. Also, if $\ww{x}=\ww{y}$ then, by the linearity of the system, $S_x=S_y$ and $\xi(x)=\xi(y)$.

\begin{proposition}
\label{propBoundedSystem}
There is $r<\infty$ such that $\xi(x)\le r$ for all $x\in\mathbb{R}^d\setminus\{0\}$.
\end{proposition}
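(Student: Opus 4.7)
The plan is to find an explicit constant $r$ by bounding the exponential growth rate of any trajectory of $\Sigma$ uniformly in the control $u$. The crucial observation is that if $\varphi(t,x,u) = e^{t\alpha} x$, then in particular $\|\varphi(t,x,u)\| = e^{t\alpha}\|x\|$, so $\alpha$ equals the average logarithmic growth $\frac{1}{t}\log(\|\varphi(t,x,u)\|/\|x\|)$ of the trajectory. If I can bound this growth rate uniformly over $u \in \mathcal{U}$, $t > 0$ and $x \neq 0$, that same bound will serve as the desired $r$.

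First, I would set $X(u) = A + \sum_{i=1}^m u_i B_i$ and define
$$r = \sup_{u \in U} \|X(u)\|,$$
which is finite because $U$ is compact and $u \mapsto X(u)$ is continuous (the matrix norm $\|\cdot\|$ here is the operator norm induced by the Euclidean norm on $\mathbb{R}^d$). Then along any trajectory $x(t) = \varphi(t, x, u)$ of $\Sigma$, I would compute
$$\frac{d}{dt}\|x(t)\|^2 = 2\langle x(t), X(u(t)) x(t)\rangle \le 2\|X(u(t))\|\,\|x(t)\|^2 \le 2r\,\|x(t)\|^2,$$
which after integrating (or Gr\"onwall) yields the basic estimate
$$\|\varphi(t,x,u)\| \le e^{tr}\|x\| \qquad\text{for all } t \ge 0,\ u \in \mathcal{U},\ x \in \mathbb{R}^d.$$

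Finally, I would plug this into the defining identity $\varphi(t,x,u) = e^{t\alpha}x$ for any $\alpha \in S_x$: taking norms gives $e^{t\alpha}\|x\| \le e^{tr}\|x\|$, and since $x \neq 0$ and $t > 0$ this forces $\alpha \le r$. Hence $\xi(x) = \sup S_x \le r$ for every $x \in \mathbb{R}^d \setminus \{0\}$.

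There is no real obstacle here beyond being careful that the supremum defining $r$ is attained on the compact set $U$ (so $r < \infty$) and that the Gr\"onwall-type step is valid for Carath\'eodory solutions, where $u(t)$ is only measurable; the bound $\|X(u(t))\| \le r$ holds pointwise almost everywhere, which is enough to integrate the differential inequality for $\|x(t)\|^2$.
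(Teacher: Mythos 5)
Your proof is correct, and it takes a genuinely different route from the paper's. You bound the instantaneous growth rate pointwise: with $r=\sup_{u\in U}\Vert A+\sum_i u_iB_i\Vert$ (finite by compactness of the control \emph{value} set $U$ alone), a Cauchy--Schwarz estimate on $\frac{d}{dt}\Vert x(t)\Vert^2$ plus Gr\"onwall gives $\Vert\varphi(t,x,u)\Vert\le e^{tr}\Vert x\Vert$ for \emph{all} $t\ge 0$, from which $\alpha\le r$ for every $\alpha\in S_x$ is immediate. The paper instead works with the operator norms $\Vert\varphi_u^t\Vert$: it uses compactness of $\mathcal{U}$ in the weak* topology and continuity of $(t,u)\mapsto\varphi_u^t$ to bound $\frac{1}{t}\ln\Vert\varphi_u^t\Vert$ on the compact set $[1,2]\times\mathcal{U}$, extends the bound to all $t\ge 1$ by the cocycle property and submultiplicativity of the norm, and then must replace a witness $(t,u)$ for $\alpha\in S_x$ by a cyclic control iterated $n$ times so that $nt>1$ before concluding. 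Your argument buys an explicit constant, needs no weak* machinery and no iteration trick, and is valid for Carath\'eodory solutions exactly as you note, since the differential inequality holds almost everywhere along the absolutely continuous trajectory; the paper's argument is less explicit but reuses infrastructure (compactness of $\mathcal{U}$, the cocycle property, cyclic controls) that it needs elsewhere anyway. Notably, the paper itself deploys essentially your derivative estimate later, in Example \ref{example1}, to bound $\xi$ from above in a concrete case.
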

\begin{proof}
Since $\mathcal{U}$ is compact, the function
$$f:[1,2]\times\mathcal{U}\rightarrow \mathbb{R}$$
$$(t,x,u)\rightarrow \frac{1}{t}\ln(\Vert\varphi_u^t\Vert).$$
is bounded from above by some $r\in\mathbb{R}$, where the norm used above is the operator norm of $M_{d\times d}(\mathbb{R})$:
$$\Vert M\Vert=\sup_{x\in S^{d-1}}\Vert Sx\Vert=\sup_{x\in\mathbb{R}^d\setminus\{0\}}\frac{\Vert Sx\Vert}{\Vert x\Vert},$$
for all $M\in M_{d\times d}(\mathbb{R})$.

For any $t\ge 1$, $u\in\mathcal{U}$ and $\Vert x\Vert$, one can write $t=t_1+t_2+...+t_k$, with $t_1,t_2,...,t_k\in[1,2]$, and then, by the cocycle property,
$$\varphi_u^t=\varphi_{u_k}^{t_k}\varphi_{u_{k-1}}^{t_{k-1}}...\varphi_{u_1}^{t_1},$$
where $u_i\in\mathcal{U}$ is defined by
$$u_i(t)=u\left(t+\sum_{j=1}^{i-1}t_i\right).$$
By the definition of $r$, for all $i\in\{1,2,...,k\}$ we have
$$\ln(\Vert\varphi_{u_i}^{t_i}\Vert)\le t_ir.$$
Thus,
$$\ln(\Vert \varphi_u^t\Vert)\le \ln\left(\Vert\varphi_{u_k}^{t_k}\Vert\Vert\varphi_{u_{k-1}}^{t_{k-1}}\Vert...\Vert\varphi_{u_1}^{t_1}\Vert\right)=$$
$$\ln(\Vert\varphi_{u_k}^{t_k}\Vert)+\ln(\Vert\varphi_{u_{k-1}}^{t_{k-1}}\Vert)+...+\ln(\Vert\varphi_{u_1}^{t_1}\Vert)\le \sum_{i=1}^kt_ir=tr.$$
Therefore, this inequality still holds true for all $t\ge 1$.

Now let $x\in\mathbb{R}^d\setminus\{0\}$, and $\alpha\in S_x$. Then, there is $t>0$ and $u\in\mathcal{U}$ such that
$$\varphi(t,x,u)=e^{t\alpha}x.$$
Since this equality depends only on the values assumed by $u$ in the interval $(0,t)$, we can take $u$ as a cyclic function with period $t$. Then, the cocycle property combined with the linearity of the system then implies that
$$\varphi(nt,x,u)=e^{nt\alpha}x,$$
for all $n\in\mathbb{N}^*$. Taking $n$ sufficiently large so that $nt>1$, we have, by the previous argument,
$$\ln(\Vert \varphi_u^{nt}\Vert)\le ntr.$$
Thus,
$$\Vert e^{nt\alpha}x\Vert=\Vert\varphi(nt,x,u)\Vert\le e^{ntr}\Vert x\Vert,$$
which implies
$$e^{nt\alpha}\le e^{ntr}\Rightarrow \alpha<r.$$
Thus, $S_x$ is bounded from above by $r$, and, therefore, $\xi(x)\le r$.
\end{proof}

The next Lemma shows that we get the same values for $\xi$ if we look only at trajectories with time greater that some arbitrary $T$.

\begin{lemma}
\label{lemLongTrajectoriesSuffice}
If $\xi(x)\neq-\infty$ then for all $T>0$ and $r<\xi(x)$ there is $s>T$ and $u\in\mathcal{U}$ such that
$$\varphi(s,x,u)=e^{s\alpha} x$$
with $\alpha> r$.
\end{lemma}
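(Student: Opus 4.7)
The plan is to mimic the periodic-extension trick already used in the proof of Proposition \ref{propBoundedSystem}. Given $T>0$ and $r<\xi(x)$, the first step is to unpack the definition of $\xi(x)$ as a supremum: since $r<\sup S_x$, there must exist some $\alpha\in S_x$ with $\alpha>r$. By definition of $S_x$, this yields $t_0>0$ and $u_0\in\mathcal{U}$ such that
\[
\varphi(t_0,x,u_0)=e^{t_0\alpha}x.
\]

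Next, I would extend $u_0$ to a $t_0$-periodic control on $\mathbb{R}$; the extended control still lies in $\mathcal{U}$ because $\mathcal{U}$ is defined as the set of locally integrable functions into the fixed set $U$, and periodic extension preserves both measurability and the pointwise constraint. Writing $u$ for this periodic extension, the cocycle property together with the linearity of $\varphi$ in the initial condition gives, inductively,
\[
\varphi(nt_0,x,u)=\varphi\bigl(t_0,\varphi((n-1)t_0,x,u),u(\cdot+(n-1)t_0)\bigr)=e^{t_0\alpha}\varphi((n-1)t_0,x,u)=e^{nt_0\alpha}x
\]
for every $n\in\mathbb{N}^*$.

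Finally, choose $n$ large enough that $s:=nt_0>T$. The same $\alpha>r$ then witnesses $\varphi(s,x,u)=e^{s\alpha}x$, which is the desired conclusion. There is really no obstacle here: the only point where one must be slightly careful is the bookkeeping that the periodically extended control is still admissible and that the iteration of the cocycle identity preserves the eigenvalue relation exactly — but both are immediate from the setup in Section \ref{preliminaries}.
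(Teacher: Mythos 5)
Your proposal is correct and follows exactly the paper's own argument: extract $\alpha\in S_x$ with $\alpha>r$ from the supremum, make the control $t_0$-periodic, iterate via the cocycle property and linearity to get $\varphi(nt_0,x,u)=e^{nt_0\alpha}x$, and take $n$ large. Nothing to add.
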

\begin{proof}
By the definition of $\xi(x)$, there is $t>0$ and $u\in\mathcal{U}$ such that
$$\varphi(t,x,u)=e^{t\alpha}x$$
with $\alpha>r$. We can assume, without loss in generality, that $u$ as a cyclic function of period $t$. Then, by the linearity of the system and the co-cycle property, for all $k\in\mathbb{N}^*$,
$$\varphi(kt,x,u)=e^{kt\alpha}x.$$
Thus, it suffices to choose $s=kt$ for some sufficiently large $k$.
\end{proof}
As a consequence of this Lemma, if $\xi(x)\neq-\infty$ then there is a sequence of times $t_i>0$ and a sequence of controls $u_i\in\mathcal{U}$ such that $t_i\rightarrow+\infty$ and
$$\varphi(t_i,x,u_i)=e^{t_i\alpha_i}x$$
with $\alpha_i\rightarrow \xi(x)$.

Now we turn our attention to the projected system $\ww{\Sigma}$. Since this system is assumed to be accessible, from Proposition \ref{propExistenceOfInvControlSet} there is some (not necessarily unique) invariant control $C\subset S^{d-1}$, satisfying $\intr(C)\neq\emptyset$ and $\intr(C)\subset\oo^+(x)$ for all $\ww{x}\in C$. Furthermore, $C$ is a compact set. We fix one such invariant control set for the rest of this paper. Note that if $\ww{x}\in \intr(C)$ then $\xi(x)\neq-\infty$, since $\ww{x}\in\ww{\oo}^+(\ww{x})$.

\subsection{A particular case}
\label{sectionParticularCase}

Before going to the general case, we will assume that there is $\ww{x}\in \intr(C)$ such that $\xi(x)=0$ and $\Vert x\Vert=1$. This is assumed for all of the results in this subsection, and we later show how the general case can be reduced to this. This hypothesis implies that if $\alpha x\in\oo^+(x)$ with $\alpha>0$ then $\alpha\le 1$.

Using the inclusion $S^{d-1}\subset\mathbb{R}^d$, one can also consider $C$ as a subset of $\mathbb{R}^d$. The next results deal with this set, but under the dynamics of $\Sigma$ rather than $\ww{\Sigma}$. To avoid confusion, we will use the notation $S_C=C$ to reinforce that it is being considered as a subset of $\mathbb{R}^d$. Equivalently, $S_C$ is the set of all $y\in\pi^{-1}(C)$ with $\Vert y\Vert =1$. Since $C$ is compact, then so is $S_C$.

\begin{lemma}
\label{contractionAtBoundedCost}
There is $\epsilon>0$ such that if $y\in S_C$ then there is $\alpha>\epsilon$ such that $\alpha x\in\oo^+(y)$.
\end{lemma}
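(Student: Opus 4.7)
The plan is to prove the Lemma by a finite-cover argument on the compact set $S_C$. For each $y\in S_C$ I would first produce a single triple $(t_y,u_y,\alpha_y)$ with $\alpha_y>0$ and $\varphi(t_y,y,u_y)=\alpha_y x$, and then promote it to an open neighborhood $W_y\subset S_C$ on which every $y'$ admits some $\mu>\epsilon_y$ with $\mu x\in\oo^+(y')$ for a fixed $\epsilon_y>0$. Compactness of $S_C$ then yields a finite subcover $\{W_{y_1},\dots,W_{y_k}\}$ and the lemma holds with $\epsilon=\min_i\epsilon_{y_i}$.

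\textbf{Pointwise step.} Fix $y\in S_C$, so $\widehat{y}\in C$. Proposition \ref{propExistenceOfInvControlSet} applied to $C$ gives $\widehat{x}\in\intr(C)\subset\widehat{\oo}^+(\widehat{y})$, so there exist $t_y>0$ and $u_y\in\mathcal{U}$ with $\widehat{\varphi}(t_y,\widehat{y},u_y)=\widehat{x}$. By linearity of $\varphi$ in the state variable this lifts to $\varphi(t_y,y,u_y)=\alpha_y x$ for some $\alpha_y>0$.

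\textbf{Propagation to a neighborhood.} Continuity of $\varphi$ in the initial condition (with $t_y, u_y$ held fixed) lets me take $W_y\subset S_C$ small enough that $z:=\varphi(t_y,y',u_y)$ satisfies $\Vert z-\alpha_y x\Vert<\alpha_y/2$ for every $y'\in W_y$; in particular $\Vert z\Vert>\alpha_y/2$ and $\widehat{z}$ is arbitrarily close to $\widehat{x}$. Since $\widehat{x}\in\intr(C)$ and $\intr(C)$ is open in $S^{d-1}$, I can shrink $W_y$ further to guarantee $\widehat{z}\in\intr(C)$; then Proposition \ref{propExistenceOfInvControlSet} again supplies $(s,v)$ with $\widehat{\varphi}(s,\widehat{z},v)=\widehat{x}$, equivalently $\varphi(s,z,v)=\mu x$ for some $\mu>0$, and the cocycle property gives $\mu x\in\oo^+(y')$.

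\textbf{Main obstacle.} The delicate point is a uniform positive lower bound on $\mu$ as $y'$ varies over $W_y$: a priori the corrective pair $(s,v)$ depends on $\widehat{z}$ and nothing above prevents $\mu\to 0$ as $\widehat{z}\to\widehat{x}$. To overcome this I would exploit the openness of the endpoint map at $\widehat{x}$ guaranteed by local accessibility of $\widehat{\Sigma}$: fix a loop $\varphi(s_0,x,v_0)=\lambda_0 x$ at $\widehat{x}$ (which exists since $\widehat{x}\in\widehat{\oo}^+(\widehat{x})$, and which by Lemma \ref{lemLongTrajectoriesSuffice} combined with $\xi(x)=0$ can even be taken with $\lambda_0$ arbitrarily close to $1$), and use normal accessibility of $\widehat{\Sigma}$ near $\widehat{x}$ to produce, for every $\widehat{z}$ in a sufficiently small neighborhood of $\widehat{x}$, a perturbation $(s,v)$ of $(s_0,v_0)$ still steering $\widehat{z}$ exactly to $\widehat{x}$, with $\Vert\varphi(s,z,v)\Vert$ close to $\lambda_0\Vert z\Vert\ge\lambda_0\alpha_y/2$. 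Shrinking $W_y$ once more then gives a fixed $\epsilon_y>0$ with $\mu>\epsilon_y$ valid throughout $W_y$, and the compactness argument on $S_C$ concludes.
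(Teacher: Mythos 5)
Your skeleton (steer each $y\in S_C$ to a positive multiple of $x$, propagate to a neighborhood, take a finite subcover of the compact set $S_C$) is the same as the paper's, and you correctly identify the crux: obtaining a lower bound for the scaling factor that is uniform over each neighborhood. But your resolution of that crux is where the argument is incomplete. You propose to steer $y'$ to a point $z$ with $\widehat z$ near $\widehat x$ and then to correct exactly back to $\widehat x$ by perturbing a fixed loop $(s_0,v_0)$ at $\widehat x$. For the perturbation step to work via an implicit-function-theorem argument you need the endpoint map of that particular loop to be submersive (i.e.\ the loop must realize normal accessibility of $\widehat x$ from $\widehat x$); the loop you extract from $\xi(x)=0$ and Lemma \ref{lemLongTrajectoriesSuffice} is just \emph{some} control with $\varphi(s_0,x,v_0)=\lambda_0 x$, and nothing guarantees it is a regular point of the endpoint map. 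This is repairable (concatenate with a normally accessible excursion inside $\intr(C)$, at the cost of replacing $\lambda_0$ by some other positive constant --- note that only positivity of $\lambda_0$ matters, so the appeal to $\xi(x)=0$ is a red herring), but as written the key step is asserted rather than proved, and it invokes machinery well beyond what the statement needs.

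The paper avoids the exact-correction problem entirely by changing the target: instead of steering into a shrinking neighborhood of the \emph{point} $\widehat x$ and then correcting, it steers each $\widehat y\in C$ into the \emph{open} set $\intr\bigl(\widehat\oo_{[r,0]}(\widehat x)\bigr)\subset\intr(C)$ (nonempty by local accessibility, $r<0$ small). Any point of that set reaches $\widehat x$ within time $-r$ by construction, so every $\widehat y'$ in a neighborhood of $\widehat y$ reaches $\widehat x$ in time at most $s-r$; a finite subcover of $C$ then gives a single $T$ with $\widehat x\in\widehat\oo_{[0,T]}(\widehat y)$ for all $\widehat y\in C$. Once the time is uniformly bounded, the lower bound $\epsilon$ is immediate: $\alpha=\Vert\varphi_u^t(y)\Vert$ (as $\Vert x\Vert=1$), and $(t,y,u)\mapsto\Vert\varphi_u^t(y)\Vert$ is continuous and strictly positive on the compact set $[0,T]\times S_C\times\mathcal U$, hence bounded below by some $\epsilon>0$. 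You should restructure your neighborhood step along these lines: make the intermediate target an open subset of the backward orbit of $\widehat x$ so that the uniform time bound, and with it the uniform $\epsilon$, falls out of compactness alone.
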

\begin{proof}
We first claim that there is $T>0$ such that $\widehat x\in\widehat\oo_{[0,T]}(\widehat y)$ for all $\widehat y\in C$.

In fact, let $r<0$ sufficiently small so that $\widehat\oo_{[r,0]}(\widehat x)\subset \intr(C)$. Since the system is locally accessible, there is $\widehat z\in \intr(\widehat\oo_{[r,0]}(\widehat x))$. In particular, $\widehat z\in \intr(C)$. For each $\widehat y\in C$, there is $v\in\mathcal{U}$ and $s>0$ such that
$$\widehat\varphi(s,\widehat y,v)=\widehat z.$$
Then, there is a neighborhood $V$ of $\widehat y$ such that
$$\widehat\varphi(s,V,v)\subset\widehat\oo_{t\in[r,0]}(\widehat x),$$
which implies that every element of $V$ can reach $\widehat x$ in a time no greater than $s_V=r+s$. We can construct one such neighborhood for each $\widehat y\in C$, and, since $C$ is compact, it can be covered by a finite amount of them, say, $V_1,V_2,...,V_k$. Then, we can take $T=\max(s_{V_1},s_{V_2},...,s_{V_k})$.

Thus, for all $y\in S_C$ there is $u\in\mathcal{U}$, $t\in[0,T]$ and $\alpha>0$ such that $\varphi_u^t(y)=\alpha x$. The compactness of $S_C,\mathcal{U}$ and $[0,T]$ ensures the existence of $\epsilon>0$ satisfying the Lemma.
\end{proof}

\begin{lemma}
\label{neighborhoodOfX}
There is a neighborhood $V\subset \intr(C)$ of $\widehat x$ and $L,\epsilon>0$ such that for every $z\in\pi^{-1}(V)\cap S_C$ there is $\alpha z\in\oo^+(x)$ with $L>\alpha>\epsilon$.
\end{lemma}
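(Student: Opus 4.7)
The plan is to produce, for every unit vector $z$ whose projection lies in a suitable neighborhood of $\ww{x}$, a single two-leg trajectory from $x$ to a positive multiple of $z$ whose parameters vary over compact sets. This will force the scaling factor to be uniformly bounded above and away from zero.

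First I would pick an intermediate point using local accessibility. By continuity of $\widehat\varphi$ and compactness of $\mathcal{U}$, choose $T_1>0$ small enough that $\widehat\oo_{[0,T_1]}(\ww{x})\subset \intr(C)$; by local accessibility this set still has nonempty interior. Pick $\ww{z}_0$ in this interior together with an open neighborhood $W_0 \subset \widehat\oo_{[0,T_1]}(\ww{x})$ of $\ww{z}_0$. Since $\ww{z}_0 \in \intr(C)$, Proposition \ref{propExistenceOfInvControlSet} gives $\ww{x} \in \intr(C) \subset \widehat\oo^+(\ww{z}_0)$, so there exist $t_1>0$ and $u_1 \in \mathcal{U}$ with $\widehat\varphi(t_1, \ww{z}_0, u_1) = \ww{x}$.

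Next I would use the resulting map to transfer $W_0$ onto a neighborhood of $\ww{x}$. The map $\Psi(\ww{y}) = \widehat\varphi(t_1, \ww{y}, u_1)$ is induced by the invertible operator $\varphi_{u_1}^{t_1} \in \ggl$ and is therefore a diffeomorphism of $S^{d-1}$; in particular it is open, so $V := \Psi(W_0) \cap \intr(C)$ is an open neighborhood of $\ww{x} = \Psi(\ww{z}_0)$ inside $\intr(C)$. Given $z \in \pi^{-1}(V) \cap S_C$, I pull back to $\ww{w} := \Psi^{-1}(z) \in W_0$ and select $t_2 \in [0, T_1]$, $u_2 \in \mathcal{U}$ with $\widehat\varphi(t_2, \ww{x}, u_2) = \ww{w}$. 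Concatenating $u_2$ followed by $u_1$ and using the cocycle property yields a control $u$ for which
$$\varphi(t_1+t_2, x, u) \;=\; \varphi_{u_1}^{t_1}\varphi_{u_2}^{t_2}(x) \;=\; \alpha z$$
for some $\alpha > 0$; since $t_1 + t_2 > 0$, the point $\alpha z$ lies in $\oo^+(x)$.

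Finally, the uniform bounds on $\alpha$ come from compactness. The set
$$K := \{\varphi_{u_1}^{t_1}\varphi_{u_2}^{t_2}(x) : t_2 \in [0,T_1],\ u_2 \in \mathcal{U}\}$$
is a continuous image of the compact set $[0, T_1] \times \mathcal{U}$, hence compact; each of its elements is nonzero because $\varphi_{u_1}^{t_1}\varphi_{u_2}^{t_2}$ is invertible and $x \neq 0$. Consequently $\inf_{y \in K} \|y\| > 0$ and $\sup_{y \in K} \|y\| < \infty$, and these two positive numbers (shrunk and enlarged slightly if one wants strict inequalities) provide the required $\epsilon$ and $L$. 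The only delicate step is the openness argument for $\Psi$ together with ensuring all intermediate points stay in $\intr(C)$; once those are arranged, the boundedness of $\alpha$ is a direct compactness argument.
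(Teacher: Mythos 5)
Your proposal is correct and follows essentially the same route as the paper: shrink the time horizon so that $\widehat\oo_{[0,T_1]}(\ww{x})\subset\intr(C)$, use local accessibility to pick an interior point of that reachable set, steer it back to $\ww{x}$ with a fixed control (whose induced map is a diffeomorphism of the sphere, hence open), thereby exhibiting a neighborhood $V$ of $\ww{x}$ inside a bounded-time reachable set from $\ww{x}$, and finally obtain the uniform bounds on $\alpha$ from compactness of $[0,T]\times\mathcal{U}$ and invertibility of the transition maps. The only difference is presentational: you keep the explicit two-leg factorization $\varphi_{u_1}^{t_1}\varphi_{u_2}^{t_2}$ in the compactness step, whereas the paper absorbs it into the statement $V\subset\ww{\oo}_{[0,T]}(\ww{x})$ and invokes compactness of $\mathcal{U}$ directly.
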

\begin{proof}
Let $s>0$ sufficiently small so that $\ww{\oo}_{[0,s]}(\ww{x})\subset \intr(C)$ and let $\ww{z}\in \intr(\ww{\oo}_{[0,s]}(\ww{x}))$. Since $\ww{z}\in \intr(C)$, there is $t>0$ and $u\in\mathcal{U}$ such that $\ww{\varphi}(t,\ww{z},u)=\ww{x}$. Then, $\widehat{x}\in\intr(\ww{\varphi}_u^t(\ww{\oo}_{[0,s]}(\ww{x})))\subset\intr(\ww{\oo}_{[0,t+s]}(\ww{x}))$. Thus, there is a neighborhood $V$ of $\ww{x}$ and $T=t+s>0$ such that $V\subset\ww{\oo}_{[0,T]}(\ww{x})$. This combined with compactness of $\mathcal{U}$ implies the Lemma.
\end{proof}

Let
$$S=\{x\in\mathbb{R}^d;\Vert x\Vert=1\}$$
the unitary sphere seen as a subset of $\mathbb{R}^d$.

\begin{proposition}
\label{boundedOrbit}
The orbit
$$\oo^+(S)=\bigcup_{y\in S}\oo^+(y)\subset\mathbb{R}^d$$
is bounded.
\end{proposition}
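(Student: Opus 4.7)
The plan is to bootstrap the bound from $\oo^+(x)$ to $\oo^+(S)$ in three stages, using $\xi(x)=0$ as the anchor and Lemmas \ref{contractionAtBoundedCost} and \ref{neighborhoodOfX} to propagate outward.

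First I would show that $\oo^+(x)$ is bounded. Given $z\in\oo^+(x)$, forward invariance of $C$ under $\ww\Sigma$ yields $\ww z\in C$, so $z/\Vert z\Vert\in S_C$. Lemma \ref{contractionAtBoundedCost} provides $\beta>\epsilon$ and a trajectory steering $z/\Vert z\Vert$ to $\beta x$; by linearity and the cocycle property, $\beta\Vert z\Vert\,x\in\oo^+(z)\subset\oo^+(x)$. The observation made at the opening of this subsection (that $\xi(x)=0$ forces $\gamma\le 1$ whenever $\gamma x\in\oo^+(x)$ with $\gamma>0$) then gives $\beta\Vert z\Vert\le 1$, so $\Vert z\Vert\le 1/\epsilon$. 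Next, using Lemma \ref{neighborhoodOfX} I would extend the bound to the cone over $V$: for $y\in\pi^{-1}(V)\cap S$ the lemma supplies $\alpha\in(\epsilon,L)$ with $\alpha y\in\oo^+(x)$, so $\alpha\,\oo^+(y)=\oo^+(\alpha y)\subset\oo^+(x)$; dividing by $\alpha$ bounds $\oo^+(y)$ by $1/\epsilon^2$.

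The remaining task is to extend the bound to every $y\in S$. I would argue by contradiction: if $\oo^+(S)$ were unbounded, there would be sequences $y_n\in S$, $u_n\in\mathcal U$, $t_n\ge 0$ with $\Vert\varphi(t_n,y_n,u_n)\Vert\to\infty$; compactness of $S$ and $\mathcal U$ together with continuity of $\varphi$ force $t_n\to\infty$. I would then try to concatenate each blowing-up trajectory with a bounded-time control that steers its endpoint into $\pi^{-1}(V)$; applying the bound from the second stage at that endpoint, and then using Lemma \ref{contractionAtBoundedCost} to contract back to a multiple of $x$, one would obtain arbitrarily large multiples of $x$ inside $\oo^+(x)$, contradicting $\xi(x)=0$.

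The hardest part will be this third stage, because the fixed $C$ need not be the unique invariant control set of $\ww\Sigma$: for starting directions $\ww y$ outside $C$, Lemma \ref{contractionAtBoundedCost} does not apply directly, and one must argue that the normalized endpoint $\ww{\varphi(t_n,y_n,u_n)}$ can always be driven into $V$ in uniformly bounded extra time. I expect this to be handled by combining accessibility of $\ww\Sigma$ with a compactness/covering argument on $S^{d-1}$ analogous to the one in the proof of Lemma \ref{contractionAtBoundedCost}, now starting from arbitrary points of the sphere rather than from $C$.
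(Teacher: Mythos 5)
Your first two stages are sound and match the paper: boundedness of $\oo^+(x)$ via Lemma \ref{contractionAtBoundedCost} and the constraint $\gamma\le 1$ coming from $\xi(x)=0$ is exactly the paper's opening step, and your bound on $\oo^+(y)$ for $y\in\pi^{-1}(V)\cap S$ via Lemma \ref{neighborhoodOfX} is a correct (and slightly cleaner) packaging of what the paper uses. The problem is the third stage, and it is a genuine gap, not just a technical loose end. Your plan requires driving the normalized endpoint $\ww{\varphi(t_n,y_n,u_n)}$ (or the starting direction $\ww{y_n}$) into $V\subset\intr(C)$ in uniformly bounded time. This is impossible in general: $C$ is only \emph{one} invariant control set of $\ww{\Sigma}$, and accessibility does not make $C$ reachable from every point of $S^{d-1}$. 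If $\ww{\Sigma}$ has a second invariant control set $C_2$ disjoint from $C$, then no point of $C_2$ can ever be steered into $\intr(C)$ (invariance of $C_2$ forbids leaving it), so no compactness or covering argument on the sphere can rescue the concatenation step. The covering argument in Lemma \ref{contractionAtBoundedCost} works precisely because it starts from points of $C$ itself, where approximate controllability inside $\intr(C)$ is available; it does not extend to arbitrary starting directions.

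The missing idea is to exploit linearity in the \emph{state} variable rather than steering on the sphere. Since $\Vert x\Vert=1$ and $\ww{x}\in V$ with $V$ open, there is a single $\delta>0$ such that $\pi(x+\delta y)\in V$ for \emph{every} $y\in S$; the perturbed point $x+\delta y$ lands in the cone over $V$ without any control action. Your second stage then bounds $\oo^+(x+\delta y)$ uniformly in $y$ (after normalizing, with scaling factors $\Vert x+\delta y\Vert\in[1-\delta,1+\delta]$ bounded away from $0$ and $\infty$). Finally, for any $u\in\mathcal{U}$ and $t\ge 0$, linearity gives
$$\varphi_u^t(y)=\frac{1}{\delta}\left(\varphi_u^t(x+\delta y)-\varphi_u^t(x)\right),$$
and both terms on the right lie in bounded sets independent of $y$, $u$, $t$. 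This closes the argument and is exactly the device the paper uses (phrased there as a contradiction with $\Vert z_i\Vert\to\infty$). With this substitution for your third stage, the rest of your proposal goes through.
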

\begin{proof}
We first show that $\oo^+(x)$ is bounded. Assume, by contradiction, otherwise. Then there is a sequence $(y_i)_{i\in\mathbb{N}}\subset\oo^+(x)$ such that $\Vert y_i\Vert\rightarrow +\infty$. This sequence satisfies $y_i\in \pi^{-1}(C)$ for all $i\in\mathbb{N}$, since $C$ is invariant and $\widehat x\in C$. Thus, $\frac{y_i}{\Vert y_i\Vert}\subset S_C$ for all $i$. From Lemma \ref{contractionAtBoundedCost}, there is $\epsilon>0$ and a sequence $(a_i)_{i\in\mathbb{N}}$ such that $a_i>\epsilon$ and
$$a_ix\in\oo^+\left(\frac{y_i}{\Vert y_i\Vert}\right),$$
for every $i\in\mathbb{N}$. Then,
$$a_i\Vert y_i\Vert x\in\oo^+(y_i),$$
and, since $y_i\in\oo^+(x)$,
$$a_i\Vert y_i\Vert x\in\oo^+(x).$$
Then,
$$a_i\Vert y_i\Vert\le 1$$
for all $i\in\mathbb{N}$, which is a contradiction since $a_i>\epsilon$ and $\Vert y_i\Vert\rightarrow+\infty$. Thus proving the claim.

Now, from Lemma \ref{neighborhoodOfX}, there is a neighborhood $V\subset \intr(C)$ of $\ww{x}$ and $L_1,\epsilon_1>0$ such that for all $z\in S\cap\pi^{-1}(V)$ there is $s>0$ such that $L_1>s>\epsilon_1$ and $sz\in\oo^+(x)$.

Since $S$ is compact, there are $\delta,L_2,\epsilon_2>0$ such that for all $y\in S$, there are $r>0$ and $z\in S\cap\pi^{-1}(V)$ satisfying $L>r^{-1}>\epsilon_2$ and $x+\delta y=rz$, or, equivalently, $r^{-1}(x+\delta y)=z$. Furthermore, since $z\in S\cap \pi^{-1}(V)$, there is $s>0$ such that $L_1>s>\epsilon_1$ and $sz=sr^{-1}(x+\delta y)\in\oo^+(x)$. Thus, taking $L=L_1L_2>0$ and $\epsilon=\epsilon_1\epsilon_2$, we have that for all $y\in S$ there is $\alpha>0$ such that $L>\alpha>\epsilon$ and $\alpha(x+\delta y)\in\oo^+(x)$.

Assume by, contradiction, that $\oo^+(S)$ is not bounded. Then there is a sequence $(z_i)_{i\in\mathbb{N}}$  such that $\Vert z_i\Vert\rightarrow+\infty$ and each term of the sequence can be written as $z_i=\varphi_{u_i}^{t_i}y_i$ with $y_i\in S$, $u_i\in\mathcal{U}$ and $t_i\ge 0$. From the previous argument, there is a sequence $(\alpha_i)_{i\in\mathbb{N}}$ with $L>\alpha_i>\epsilon$ such that
$$\alpha_i(x+\delta y_i)\in\oo^+(x).$$
Then,
$$\varphi_{u_i}^{t_i}\alpha_i(x+\delta y_i)\in\oo^+(x).$$
But
$$\Vert \varphi_{u_i}^{t_i}\alpha_i(x+\delta y_i)\Vert=\Vert\alpha_i\varphi_{u_i}^{t_i}x+\alpha_i\delta z_i\Vert\ge \alpha_i\delta \Vert z_i\Vert-\alpha_i\Vert\varphi_{u_i}^{t_i}x\Vert.$$
The sequence $\alpha_i$ is bounded from bellow by $\epsilon$ and $\Vert z_i\Vert\rightarrow+\infty$, thus $\alpha_i\delta\Vert z_i\Vert\rightarrow+\infty$. However, $\alpha_i$ is also bounded from above by $L$ and $\Vert\varphi_{u_i}^{t_i}x\Vert$ is bounded since $\oo^+(x)$ is bounded. Thus $\alpha_i\Vert \varphi_{u_i}^{t_i}x\Vert$ is bounded, and, then
$$\lim_{t\rightarrow+\infty}\alpha_i\delta \Vert z_i\Vert-\alpha_i\Vert\varphi_{u_i}^{t_i}x\Vert=+\infty.$$
Consequently,
$$\Rightarrow\lim_{t\rightarrow+\infty}\Vert\varphi_{u_i}^{t_i}\alpha_i(x+\delta y_i)\Vert=\lim_{t\rightarrow+\infty}\Vert\alpha_i\varphi_{u_i}^{t_i}x+\alpha_i\delta z_i\Vert=+\infty,$$
which is a contradiction since $\oo^+(x)$ is bounded.
\end{proof}

We now start working with star sets (Definition \ref{definitionStarSet}). Due to the linearity of the system, if $W$ is a star set then $\oo_t(W)$ and $\oo^+(W)$ are star sets for all $t\in\mathbb{R}$.

Consider the set
$$B=[0,1]S=\{\alpha x;\alpha\in[0,1],x\in S\}=\{x\in\mathbb{R}^d;\Vert x\Vert\le 1\}.$$
Note that $B$ is a star set, and thus, so is $\oo^+(B)$. Furthermore,
$$\oo^+(B)=[0,1]\oo^+(S).$$
Thus, from Proposition \ref{boundedOrbit}, $\oo^+(B)$ is bounded. It is also an invariant set, since $\oo^+(\oo^+(B))=\oo^+(B)$. Let
$$D_0=\overline{\oo^+(B)}.$$
Then $D_0$ is also an invariant star set, and, since $\oo^+(B)$ is bounded, $D_0$ is compact.

Note that $0\in \intr(B)\subset \intr(\oo^+(B))$, thus $0\in \intr(D_0)$. Therefore, if $\partial D_0$ denotes the topological boundary of this set, then $0\not\in\partial D_0$.

Define the set
$$D=\bigcap_{t>0}\oo_tD_0.$$
Since $\mathcal{U}$ is compact, then $\oo_tD_0$ is compact for each $t>0$. Furthermore, since $D_0$ is invariant, then $\oo_tD_0\subset\oo_sD_0$ whenever $t>s>0$, and, thus, the intersection above is descending. Note that for all $t>0$, $\oo_tD_0\neq\emptyset$, thus $D$ is a nonempty compact set. We will prove that $D$ is nontrivial, that is, $D\neq \{0\}$. For this, it suffices to prove that $D\cap\partial D_0\neq\emptyset$, since $0\not\in\partial D_0$. We start with the following lemmas.

\begin{lemma}
Let $V\subset\mathbb{R}^d$ an open star set, and $K$ a compact set. If $K\subset V$ then there is $\alpha\in(0,1)$ such that $K\subset \alpha V$.
\end{lemma}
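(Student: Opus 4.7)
The plan is to localize at each point of $K$ using the openness of $V$ and the star property, then use compactness to make the choice of $\alpha$ uniform.

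First, I would observe that we may assume $V$ is nonempty (otherwise $K = \emptyset$ and the statement is trivial), and consequently $0 \in V$ by the star property applied with $\alpha = 0$ to any point of $V$. Fix $k \in K \subset V$. If $k = 0$, then $k/\beta = 0 \in V$ for every $\beta > 0$. If $k \neq 0$, then since $V$ is open and $k \in V$, we can find $\epsilon > 0$ so small that $(1+\epsilon)k \in V$ (because $\|(1+\epsilon)k - k\| = \epsilon\|k\|$ can be made arbitrarily small). Setting $\alpha_k = 1/(1+\epsilon) \in (0,1)$, we have $k/\alpha_k = (1+\epsilon)k \in V$.

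Next, by openness of $V$ there is an open ball $B(k/\alpha_k, r) \subset V$, and then the neighborhood $U_k := B(k, r\alpha_k)$ of $k$ satisfies $U_k/\alpha_k \subset B(k/\alpha_k, r) \subset V$, i.e.\ $U_k \subset \alpha_k V$. Covering the compact set $K$ by $\{U_k\}_{k \in K}$ and extracting a finite subcover $U_{k_1}, \ldots, U_{k_n}$, I would set $\alpha = \max_i \alpha_{k_i} \in (0,1)$.

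The last point to check, which uses the star property crucially, is that $\alpha_{k_i} V \subset \alpha V$ whenever $\alpha_{k_i} \leq \alpha$: given $v \in V$, the ratio $\alpha_{k_i}/\alpha$ lies in $[0,1]$, so by the star property $(\alpha_{k_i}/\alpha)v \in V$, and therefore $\alpha_{k_i}v = \alpha \cdot (\alpha_{k_i}/\alpha)v \in \alpha V$. Combining, $K \subset \bigcup_i U_{k_i} \subset \bigcup_i \alpha_{k_i}V \subset \alpha V$, as required. I do not expect any real obstacle here; the only subtlety is the monotonicity $\alpha_{k_i}V \subset \alpha V$, which is exactly where the star hypothesis (and not just the openness of $V$) is needed.
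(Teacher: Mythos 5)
Your proof is correct and follows essentially the same route as the paper: the paper covers $K$ by the open sets $tV$, $t\in(0,1)$ (using the identity $V=\bigcup_{t\in(0,1)}tV$, which your pointwise argument with $(1+\epsilon)k$ effectively proves), extracts a finite subcover, and takes $\alpha$ to be the maximum of the scaling factors. Your version merely spells out more explicitly the openness of the cover and the monotonicity step $t V\subset \alpha V$ for $t\le\alpha$, which the paper leaves implicit.
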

\begin{proof}
Since $V$ is an open star set, we can write
$$V=\bigcup_{t\in(0,1)}tV,$$
thus $\{tV;t\in(0,1)\}$ is an open covering of $K$. Therefore, there are $t_1,t_2,...,t_n\in(0,1)$ such that
$$K\subset\bigcup_{i=1}^nt_nV,$$
and, then, taking $\alpha=\max(t_1,t_2,...,t_n)\in(0,1)$, we have
$$K\subset\alpha V$$
\end{proof}

\begin{lemma}
For all $t>0$, $\oo_t(D_0)\cap\partial D_0\neq\emptyset$.
\end{lemma}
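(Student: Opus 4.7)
My plan is to argue by contradiction. Suppose there is $t_0>0$ with $\oo_{t_0}(D_0)\cap\partial D_0=\emptyset$. Invariance of $D_0$ then forces $\oo_{t_0}(D_0)\subset\intr(D_0)$. Because $D_0$ is a compact star set with $0\in\intr(D_0)$, its interior is again a star set (scaling a $D_0$-neighborhood of $z$ by $\alpha\in(0,1]$ produces a $D_0$-neighborhood of $\alpha z$), and $\oo_{t_0}(D_0)$ is compact as the continuous image of $\mathcal{U}\times D_0$ under $(u,y)\mapsto\varphi_u^{t_0}(y)$. The preceding lemma therefore yields some $\alpha\in(0,1)$ with $\oo_{t_0}(D_0)\subset\alpha D_0$.

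Next I would iterate. Using linearity of $\varphi_u^s$ in the state together with invariance of $D_0$, induction on $n$ gives $\oo_{nt_0}(D_0)\subset\alpha^n D_0$, and writing a generic time as $s=nt_0+r$ with $0\le r<t_0$ extends this to $\oo_s(D_0)\subset\alpha^{\lfloor s/t_0\rfloor}D_0$ for every $s\ge 0$.

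To reach a contradiction I would exploit the standing assumption $\xi(x)=0$ with $\|x\|=1$, noting that $x\in B\subset D_0$. Fix $\beta\in(\ln(\alpha)/t_0,0)$, an interval that is nonempty since $\alpha<1$. Lemma \ref{lemLongTrajectoriesSuffice} produces times $s_k\to+\infty$ and controls $u_k\in\mathcal{U}$ with $\varphi(s_k,x,u_k)=e^{s_k\gamma_k}x$ and $\gamma_k>\beta$. Since this point lies in $\oo_{s_k}(D_0)\subset\alpha^{\lfloor s_k/t_0\rfloor}D_0$, there is $d_k\in D_0$ with $\|d_k\|=e^{s_k\gamma_k}\alpha^{-\lfloor s_k/t_0\rfloor}$, and the inequality $\lfloor s_k/t_0\rfloor>s_k/t_0-1$ combined with $\ln\alpha<0$ gives $\|d_k\|\ge\alpha\exp(s_k(\beta-\ln(\alpha)/t_0))\to+\infty$, contradicting compactness of $D_0$. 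The main obstacle is this final calibration: $\beta$ must be chosen strictly above $\ln(\alpha)/t_0$ so that the exponential rate at which $x$ near-cyclically returns to itself, guaranteed by $\xi(x)=0$, outpaces the per-period contraction factor $\alpha$ of $D_0$; the preceding star-set and iteration steps are routine bookkeeping with the cocycle and linearity of the flow.
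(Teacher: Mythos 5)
Your proposal is correct and follows essentially the same route as the paper: contradiction, the star-set lemma to obtain $\oo_{t_0}(D_0)\subset\alpha D_0$ with $\alpha\in(0,1)$, iteration via the cocycle property, and a contradiction with $\xi(x)=0$ through Lemma \ref{lemLongTrajectoriesSuffice}. The only (harmless) difference is in the final bookkeeping: the paper bounds $\Vert\varphi_u^r\Vert$ for $r\in[0,t_0]$ by compactness of $\mathcal{U}$ and compares logarithmic growth rates, whereas you absorb the remainder time using invariance of $D_0$ and let a witness point of $D_0$ blow up in norm; both close the argument.
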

\begin{proof}
$D_0$ is invariant, and, thus $\oo_t(D_0)\subset D_0$ for all $t>0$. Assume, by contradiction, that $\oo_t(D_0)\cap\partial D_0=\emptyset$ for some $t>0$. Then, $\oo_t(D_0)\subset \intr(D_0)$. The set $\oo_t(D_0)$ is compact since $D_0$ and $\mathcal{U}$ are compact, and $\intr(D_0)$ is a star set since $D_0$ is a star set and $0\in \intr(D_0)$. Then, from the previous Lemma there is $\alpha\in(0,1)$ such that
$$\oo_t(D_0)\subset \alpha D_0.$$
Using the property of the cocycle, we get
$$\oo_{kt}(D_0)\subset \alpha^kD_0$$
for all $k\in\mathbb{N}$.

Since $D_0$ is compact, there is $R>0$ such that $R\ge \Vert y\Vert$ for all $y\in D_0$. Furthermore, $x\in B\subset D_0$. The previous equality then implies that, for any $u\in\mathcal{U}$ and $k\in\mathbb{N}$,
$$\Vert\varphi_u^{kt}(x)\Vert\le\alpha^kR.$$
We can write $\alpha=e^{-\epsilon}$ with $\epsilon>0$. Then:
$$\Vert\varphi_u^{kt}(x)\Vert\le e^{-\epsilon k}R.$$

Since $\mathcal{U}$ is compact, there is $L>0$ such that for all $u\in\mathcal{U}$ and $s\in[0,t]$,
$$\Vert \varphi_u^s\Vert<L.$$

Now, since $\xi(x)=0$, from the Lemma \ref{lemLongTrajectoriesSuffice}, there are sequences $u_i\in\mathcal{U}$, $s_i>0$ and $\alpha_i>0$ such that $s_i\rightarrow +\infty$,
$$\varphi_{u_i}^{s_i}x=e^{s_i\alpha_i}x$$
and $\alpha_i\rightarrow 0$. This implies that
$$\lim_{i\rightarrow +\infty}\frac{1}{s_i}\ln\Vert \varphi_{u_i}^{s_i}x\Vert=\lim_{i\rightarrow +\infty}\alpha_i=0,$$
since $\Vert x\Vert=1$.

Each $s_i$ can be written as $s_i=k_it+r_i$ with $r_i\in[0,t)$ and $k\in\mathbb{N}$. Furthermore, since $s_i\rightarrow+\infty$ then $k_i\rightarrow+\infty$ and $\frac{s_i}{k_i}\rightarrow t$. Then, for each $i\in\mathbb{N}$ we can write
$$\varphi_{u_i}^{s_i}=\varphi_{v_i}^{r_i}\circ\varphi_{u_i}^{k_it},$$
where $v_i\in\mathcal{U}$ is defined by $v_i(s)=u_i(s+k_it)$. Then,
$$\frac{1}{s_i}\ln\Vert\varphi_{u_i}^{s_i}x\Vert\le\frac{1}{s_i}\ln(\Vert\varphi_{v_i}^{r_i}\Vert\Vert\varphi_{u_i}^{k_it}x\Vert)<\frac{1}{s_i}\ln(Le^{-\epsilon k_i}R)$$
$$=\frac{1}{s_i}(\ln(L)-\epsilon k_i+\ln(R))=\frac{ln(L)+ln(R)}{s_i}-\frac{\epsilon k_i}{s_i}.$$
However,
$$\lim_{i\rightarrow +\infty}\frac{ln(L)+ln(R)}{s_i}-\frac{\epsilon k_i}{s_i}=-\frac{\epsilon}{t}<0,$$
which contradicts the previous limit.
\end{proof}

\begin{corollary}
\label{corollaryNontrivial}
$$D\cap\partial D_0\neq\emptyset$$
\end{corollary}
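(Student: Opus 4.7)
The plan is to deduce the corollary from the preceding lemma by a standard compactness/finite intersection argument. The preceding lemma asserts that $\oo_t(D_0)\cap\partial D_0\neq\emptyset$ for every $t>0$, so I want to combine these nonempty sets into a single nonempty intersection that equals $D\cap\partial D_0$.

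First I would observe that the family $\{\oo_t(D_0)\cap\partial D_0\}_{t>0}$ is a descending family of nonempty closed subsets of the compact set $D_0$. Descending: since $D_0$ is invariant, the excerpt already noted that $\oo_t(D_0)\subset\oo_s(D_0)$ whenever $t>s>0$, so intersecting with $\partial D_0$ preserves the inclusion. Nonempty: this is exactly the preceding lemma. Compact: each $\oo_t(D_0)$ is compact (as $D_0$ and $\mathcal{U}$ are compact and $\varphi$ is continuous), and $\partial D_0$ is closed, so the intersection is compact.

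Next I would apply the finite intersection property in the compact space $D_0$. Any finite subfamily $\{\oo_{t_1}(D_0)\cap\partial D_0,\dots,\oo_{t_n}(D_0)\cap\partial D_0\}$ with $t_1<\dots<t_n$ has common intersection $\oo_{t_n}(D_0)\cap\partial D_0$, which is nonempty by the lemma. Hence the full intersection is nonempty:
$$\bigcap_{t>0}\bigl(\oo_t(D_0)\cap\partial D_0\bigr)\neq\emptyset.$$
Finally, by distributing the intersection,
$$\bigcap_{t>0}\bigl(\oo_t(D_0)\cap\partial D_0\bigr)=\left(\bigcap_{t>0}\oo_t(D_0)\right)\cap\partial D_0=D\cap\partial D_0,$$
which gives the desired conclusion. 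There is essentially no obstacle here beyond checking nestedness and compactness, both of which are immediate from material already in the excerpt; the corollary is really just a packaging of the previous lemma via the finite intersection property, and together with the fact $0\notin\partial D_0$ (already recorded) it yields that $D$ is nontrivial.
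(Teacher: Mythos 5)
Your proposal is correct and follows essentially the same argument as the paper: rewrite $D\cap\partial D_0$ as the descending intersection $\bigcap_{t>0}\bigl(\oo_t(D_0)\cap\partial D_0\bigr)$ of nonempty compact sets and conclude by the finite intersection property. No issues.
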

\begin{proof}
Note that
$$D\cap\partial D_0=\bigcap_{t>0}(\oo_tD_0\cap\partial D_0).$$
The intersection on the right is a descending intersection of compact sets, and, by the previous Lemma, they are all nonempty. Therefore $D\cap \partial D_0\neq\emptyset$.
\end{proof}

Therefore $D$ is not a trivial set. We now show that it satisfies $\oo_t(D)=D=e^{0t}D$ for all $t>0$:

\begin{proposition}
\label{caseXiZero}
For all $t>0$, $\oo_t(D)=D$.
\end{proposition}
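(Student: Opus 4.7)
The plan is to prove the two inclusions $\oo_t(D)\subset D$ and $D\subset \oo_t(D)$ separately. The first is a short formal manipulation, while the second requires a compactness argument using the compactness of $D_0$ and $\mathcal{U}$ together with continuity of $\varphi$.

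For the easy inclusion, I would first observe that the family $\{\oo_s(D_0)\}_{s>0}$ is decreasing in $s$: if $0<s<t$, then since $D_0$ is invariant, $\oo_{t-s}(D_0)\subset D_0$, hence $\oo_t(D_0)=\oo_s(\oo_{t-s}(D_0))\subset\oo_s(D_0)$ by the cocycle property. Consequently $D=\bigcap_{s>T}\oo_s(D_0)$ for every $T>0$. Now for any $y\in\oo_t(D)$ we can write $y\in\oo_t(z)$ for some $z\in D$; for each $s>0$ we have $z\in\oo_s(D_0)$, so $y\in\oo_t(\oo_s(D_0))=\oo_{t+s}(D_0)$. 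Thus $y\in\bigcap_{s>0}\oo_{t+s}(D_0)=\bigcap_{r>t}\oo_r(D_0)=D$.

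For the reverse inclusion, fix $y\in D$. Since $y\in\oo_{t+n}(D_0)$ for every $n\in\mathbb{N}$, I can pick $z_n\in D_0$ and $u_n\in\mathcal{U}$ with $\varphi(t+n,z_n,u_n)=y$. Define $w_n=\varphi(n,z_n,u_n)$ and $\widetilde{u}_n\in\mathcal{U}$ by $\widetilde{u}_n(s)=u_n(s+n)$, so that $\varphi(t,w_n,\widetilde{u}_n)=y$. By construction $w_n\in\oo_n(D_0)\subset D_0$, and $D_0$ is compact, so after passing to a subsequence $w_n\to w\in D_0$. For any fixed $s>0$, eventually $n>s$, so $w_n\in\oo_n(D_0)\subset\oo_s(D_0)$ by the decreasing property; since $\oo_s(D_0)$ is closed, $w\in\oo_s(D_0)$. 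This holds for every $s>0$, so $w\in D$.

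Finally, since $\mathcal{U}$ is compact, pass to a further subsequence so that $\widetilde{u}_n\to\widetilde{u}\in\mathcal{U}$. By continuity of $\varphi$ on $\mathbb{R}\times\mathbb{R}^d\times\mathcal{U}$, $\varphi(t,w_n,\widetilde{u}_n)\to\varphi(t,w,\widetilde{u})$; but the left side is constantly equal to $y$, so $y=\varphi(t,w,\widetilde{u})\in\oo_t(w)\subset\oo_t(D)$. The main obstacle is exactly this second inclusion, where one must simultaneously control the limits of states and controls while making sure the limit state lies inside the intersection $D$ and not merely in $D_0$; this is precisely what the decreasing nested property of $\{\oo_s(D_0)\}_{s>0}$ buys us.
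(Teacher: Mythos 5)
Your proof is correct, and it is in fact more complete than the one in the paper. The paper's argument consists of the single chain of equalities $\oo_t(D)=\oo_t\bigl(\bigcap_{s>0}\oo_s(D_0)\bigr)=\bigcap_{s>0}\oo_t(\oo_s(D_0))=\bigcap_{s>t}\oo_s(D_0)$, followed by the observation that the family $\{\oo_s(D_0)\}_{s>0}$ is descending, which is exactly your "easy inclusion" step. What the paper does not justify is the second equality: the containment $\oo_t\bigl(\bigcap_{s>0}\oo_s(D_0)\bigr)\subset\bigcap_{s>0}\oo_t(\oo_s(D_0))$ is automatic, but the reverse containment --- that a point lying in $\oo_{t+s}(D_0)$ for every $s$ is actually reachable at time $t$ from a \emph{single} point of $D$ --- is precisely the nontrivial direction. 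Your compactness argument (extracting convergent subsequences $w_n\to w$ in $D_0$ and $\widetilde u_n\to\widetilde u$ in $\mathcal U$, using closedness of each $\oo_s(D_0)$ to place $w$ in $D$, and joint continuity of $\varphi$ to conclude $y=\varphi(t,w,\widetilde u)$) is exactly what is needed to fill this gap, and it uses only facts the paper has already established (compactness of $\mathcal U$ and of $\oo_s(D_0)$, continuity of $\varphi$ in the weak* topology). So the two proofs share the same skeleton, but yours makes explicit the interchange of orbit and intersection that the paper asserts without comment.
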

\begin{proof}
Note that
$$\oo_t(D)=\oo_t\left(\bigcap_{s>0}\oo_s(D_0)\right)=\bigcap_{s>0}\oo_t\left(\oo_s(D_0)\right)=\bigcap_{s>t}\oo_s(D_0).$$
However, the intersection above is descending. In fact, if $s_1,s_2\in\mathbb{R}$ are such that $s_1>s_2$ then
$$\oo_{s_1}(D_0)=\oo_{s_2}\left(\oo_{s_1-s_2}(D_0)\right)\subset\oo_{s_2}(D_0),$$
since $D_0$ is invariant. Therefore,
$$\bigcap_{s>t}\oo_s(D_0)=\bigcap_{s>0}\oo_s (D_0)=D,$$
and, thus,
$$\oo_t(D)=D.$$
\end{proof}

Taking $R=0$, we have, from Proposition \ref{caseXiZero}, that
$$\oo_t(D)=e^{tR}D$$
for all $t>0$. From Corollary \ref{corollaryNontrivial}, $D$ is also not trivial, and, thus, $D$ is eigenset.

\subsection{The general case}
\label{generalCase}
On Section \ref{sectionParticularCase} we proved that $\Sigma$ admits an eigenset if there is $x\in\pi^{-1}(\intr(C))$ with $\xi(x)=0$. We will now generalize these results for the general case (still assuming accessibility of $\ww{\Sigma}$).

Denoting
$$F:U\rightarrow\mathfrak{gl}(d,\mathbb{R})$$
$$u\rightarrow A+\sum_{i=1}^mu_iB_i,$$
we have that the equation for $\Sigma$ can be rewritten as
$$\dot x(t)=F(u(t))x(t).$$
For each $r\in\mathbb{R}$ we define a new bilinear system $\Sigma_r$ by
$$\dot x(t)=(F(u(t))-rId)x(t)=(A-rId)x+\sum_{i=1} ^mu_iB_i;~u\in\mathcal{U},$$
where $Id$ denotes the identity matrix in $\ggl$.

If $\varphi_r$ denotes the solution of this new system, then for all $t\in\mathbb{R},x\in\mathbb{R}^d$ and $u\in\mathcal{U}$,
\begin{equation}
\label{equalityr}
\varphi_r(t,x,u)=e^{-tr}\varphi(t,x,u).
\end{equation}
In fact, differentiating the curve on the right we have:
$$\frac{d}{dt}e^{-tr}\varphi(t,x,u)=-re^{-tr}\varphi(t,x,u)+e^{-tr}F(u(t))\varphi(t,x,u)$$
$$=(F(u(t))-rId)e^{-tr}\varphi(t,x,u),$$
and, furthermore, $e^{-0r}\varphi(t,x,u)=x$, thus $e^{-tr}\varphi(t,x,u)$ is solution for $\Sigma_r$.

From this equality one sees that $\Sigma$ and $\Sigma_r$ project into the same system $\ww{\Sigma}=\ww{\Sigma_r}$ in $S^{d-1}$. In particular, $\ww{\Sigma_r}$ is also accessible in the sphere, and, if $C\subset S^{d-1}$ is an invariant control set for $\ww{\Sigma}$ then it is also an invariant control set for $\ww{\Sigma_r}$.

Since $\Sigma_r$ is also a bilinear control system, we can define $\xi_r:\mathbb{R}^d\setminus\{0\}\rightarrow \mathbb{R}\cup\{+\infty,-\infty\}$ for $\Sigma_r$ similarly to how $\xi$ was defined for $\Sigma$. From the equality \ref{equalityr} we can also see that
$$\xi_r(x)=\xi(x)-r$$
for all $x\in\mathbb{R}^d\setminus\{0\}$.

Now fix $x\in\pi^{-1}(\intr(C))$ with $\Vert x\Vert=1$. We have $\xi(x)\neq+\infty$ due to Proposition \ref{propBoundedSystem}, and $\xi(x)\neq-\infty$ since $\ww{x}\in\ww{\oo}^+(\ww{x})$, thus $\xi(x)\in\mathbb{R}$. Denote $R=\xi(x)$. Then $\xi_R(x)=0$, and the system $\Sigma_R$ satisfies the hypothesis of the previous section.

We will use $\oo^R$ to denote the orbits in the system $\Sigma_R$.

\begin{proposition}
$\xi$ is constant in $\pi^{-1}(\intr(C))$, furthermore, for all $y\in\mathbb{R}^d\setminus\{0\}$, $\xi(y)\le R$.
\end{proposition}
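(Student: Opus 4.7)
The plan is to dispatch the two assertions separately, both by invoking constructions already in place.

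For the constancy of $\xi$ on $\pi^{-1}(\intr(C))$, fix $y, z \in \pi^{-1}(\intr(C))$. Proposition \ref{propExistenceOfInvControlSet} applied to $\widehat y, \widehat z \in C$ gives $\widehat z \in \intr(C) \subset \widehat\oo^+(\widehat y)$ and symmetrically $\widehat y \in \widehat\oo^+(\widehat z)$, which lift via linearity of the flow to positive constants $s_1, s_2, \beta_1, \beta_2$ and controls $u_1, u_2 \in \mathcal{U}$ satisfying $\varphi(s_1, y, u_1) = \beta_1 z$ and $\varphi(s_2, z, u_2) = \beta_2 y$. Given $r < \xi(z)$, Lemma \ref{lemLongTrajectoriesSuffice} supplies an arbitrarily large $t$ and a control $u \in \mathcal{U}$ with $\varphi(t, z, u) = e^{t\alpha} z$ for some $\alpha > r$. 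Concatenating $u_1$, then $u$, then $u_2$ on the successive time intervals produces $u^* \in \mathcal{U}$ for which linearity and the cocycle property yield
\[
\varphi(s_1+t+s_2, y, u^*) = \beta_1 \beta_2 e^{t\alpha} y,
\]
so the associated exponential rate is
\[
\alpha^* = \frac{t\alpha + \ln(\beta_1\beta_2)}{s_1+t+s_2} \ge \frac{t r + \ln(\beta_1\beta_2)}{s_1+t+s_2},
\]
and the right-hand side tends to $r$ as $t \to \infty$. Hence $\xi(y) \ge r$ for every $r < \xi(z)$, giving $\xi(y) \ge \xi(z)$; swapping $y$ and $z$ gives equality.

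For the bound $\xi(y) \le R$ on all of $\mathbb{R}^d \setminus \{0\}$, pass to the shifted system $\Sigma_R$. Since $\xi_R(x) = \xi(x) - R = 0$, $\|x\| = 1$, and $\widehat x \in \intr(C)$ (the invariant control set being common to $\widehat{\Sigma_R} = \widehat\Sigma$), $\Sigma_R$ satisfies the standing hypothesis of Section \ref{sectionParticularCase}. Proposition \ref{boundedOrbit} applied to $\Sigma_R$ therefore shows the positive $\Sigma_R$-orbit of the unit sphere is bounded, and by linearity so is the positive $\Sigma_R$-orbit of every $y \ne 0$. If $\alpha \in S_y$, then \eqref{equalityr} gives $\varphi_R(t, y, u) = e^{t(\alpha - R)} y$ for some $t > 0$, $u \in \mathcal{U}$; replacing $u$ by its $t$-periodic extension and iterating via the cocycle yields $\varphi_R(nt, y, u) = e^{nt(\alpha - R)} y$ for every $n \in \mathbb{N}$, so boundedness forces $\alpha \le R$. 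Taking supremum over $\alpha \in S_y$ gives $\xi(y) \le R$ (the case $\xi(y) = -\infty$ being trivial).

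Neither step presents a serious obstacle; the main subtlety is the first part's limit calculation, which requires making the loop length $t$ large compared to the fixed transit times $s_1, s_2$, which is precisely what Lemma \ref{lemLongTrajectoriesSuffice} delivers.
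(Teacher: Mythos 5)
Your proof is correct, and the two halves sit differently relative to the paper. The bound $\xi(y)\le R$ is handled exactly as in the paper: pass to $\Sigma_R$, invoke Proposition \ref{boundedOrbit} for boundedness of $(\oo^R)^+$ of the sphere, and rule out any rate $\alpha>R$ by periodizing the loop (the paper compresses this last step into a citation of Lemma \ref{lemLongTrajectoriesSuffice}, which is the same iteration you spell out). For the constancy on $\pi^{-1}(\intr(C))$, however, you take a genuinely different route. The paper obtains $\xi(x)\le\xi(y)$ by symmetry: it reruns the boundedness argument with $y/\Vert y\Vert$ in the role of $x$ and $R'=\xi(y)$ in the role of $R$, so that Proposition \ref{boundedOrbit} applied to $\Sigma_{R'}$ forces $\xi_{R'}(x)\le 0$. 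You instead argue directly inside the invariant control set: using $\intr(C)\subset\ww{\oo}^+(\ww{y})$ from Proposition \ref{propExistenceOfInvControlSet}, you lift the mutual reachability of $\ww{y}$ and $\ww{z}$ to scalar relations $\varphi(s_1,y,u_1)=\beta_1 z$, $\varphi(s_2,z,u_2)=\beta_2 y$, splice in a long loop at $z$ of rate $\alpha>r$, and let the loop length absorb the fixed transit cost $\ln(\beta_1\beta_2)$ and the fixed times $s_1,s_2$. Your version is more self-contained and purely dynamical -- it does not require re-running the entire machinery of Section \ref{sectionParticularCase} at a second base point -- and it makes explicit why the interior of the control set is the natural domain of constancy; the paper's version is shorter on the page but leans on the reader to see that Proposition \ref{boundedOrbit} really does apply verbatim with the roles swapped. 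Both are sound; the only points worth noting in yours are that $s_1,s_2$ need only be nonnegative (and the degenerate case $\ww{y}=\ww{z}$ is immediate from the remark that $\xi$ depends only on $\pi(y)$), and that $\alpha$ may vary with $t$ in the limit computation, which your inequality $\alpha^*\ge (tr+\ln(\beta_1\beta_2))/(s_1+t+s_2)$ already accounts for.
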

\begin{proof}
Let $y\in\mathbb{R}^d\setminus\{0\}$ be arbitrary. From Lemma \ref{boundedOrbit}, the orbit $(\oo^R)^+(\frac{y}{\Vert y\Vert})$ is bounded, which, by linearity of $\Sigma_R$, implies that $(\oo^R)^+(y)$ is bounded. This, combined with Lemma \ref{lemLongTrajectoriesSuffice}, implies that $\xi_R(y)\le 0$, thus $\xi(y)\le R=\xi(x)$. If $y\in \pi^{-1}(\intr(C))$ then a similar argument shows that $\xi(x)\le\xi(y)$, and, thus, $\xi(x)=\xi(y)$.
\end{proof}

Therefore, the constant $R$ does not depend on the point $x\in \pi^{-1}(\intr(C))$ chosen. In fact, from the previous proposition it can also be seen that if $C_2$ is another invariant control set and $y\in\pi^{-1}(\intr(C_2))$ then $\xi(x)=\xi(y)$, so that $R$ depends only on the control system $\Sigma$.

\begin{theorem}
\label{mainTheorem}
If $\Sigma$ is accessible in $S^{d-1}$ then there is a compact subset $D\subset\mathbb{R}^d$ that is nontrivial in the sense that $D\not\subset\{0\}$, such that for all $t>0$
$$\oo^+(D)=e^{tR}D.$$
\end{theorem}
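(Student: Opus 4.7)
The plan is to reduce the theorem to the particular case treated in Section \ref{sectionParticularCase} via the one-parameter family of auxiliary systems $\Sigma_r$ introduced at the start of Section \ref{generalCase}. Since the accessibility hypothesis is on $\ww{\Sigma}$ and by Proposition \ref{propExistenceOfInvControlSet} there is an invariant control set $C\subset S^{d-1}$ with nonempty interior, I would first pick any $x\in\pi^{-1}(\intr(C))$ with $\Vert x\Vert=1$. By Proposition \ref{propBoundedSystem} we have $\xi(x)<\infty$, and because $\ww{x}\in\ww{\oo}^+(\ww{x})$ (as $\intr(C)\subset\ww{\oo}^+(\ww{x})$) we also have $\xi(x)\neq-\infty$, so $R:=\xi(x)\in\mathbb{R}$ is well defined.

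Next I would pass to the shifted system $\Sigma_R$. Since $\xi_R(y)=\xi(y)-R$ for all nonzero $y$, we have $\xi_R(x)=0$. Moreover, $\Sigma_R$ projects to the same system on the sphere as $\Sigma$, so $\ww{\Sigma_R}$ is accessible and $C$ remains an invariant control set for it with $\ww{x}\in\intr(C)$. Therefore $\Sigma_R$ satisfies exactly the standing hypothesis of Section \ref{sectionParticularCase}. Applying the construction from that section to $\Sigma_R$ yields a compact star set
\[
D=\bigcap_{t>0}\oo^R_t(D_0^R),\qquad D_0^R=\overline{(\oo^R)^+(B)},
\]
and Corollary \ref{corollaryNontrivial} together with Proposition \ref{caseXiZero} give that $D$ is nontrivial (since $0\notin\partial D_0^R$ while $D\cap\partial D_0^R\neq\emptyset$) and satisfies $\oo^R_t(D)=D$ for all $t>0$.

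Finally I would transfer this back to $\Sigma$ using the identity $\varphi_R(t,y,u)=e^{-tR}\varphi(t,y,u)$ from equation \ref{equalityr}. This identity implies $\oo^R_t(E)=e^{-tR}\oo_t(E)$ for every set $E\subset\mathbb{R}^d$ and every $t>0$. Applying it to $E=D$ and using the invariance $\oo^R_t(D)=D$ gives
\[
D=\oo^R_t(D)=e^{-tR}\oo_t(D),
\]
so $\oo_t(D)=e^{tR}D$ for all $t>0$, which is the required property. Since $D$ is nontrivial, this completes the proof.

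Essentially no real obstacle remains at this stage: the two section-\ref{sectionParticularCase} results have been engineered precisely so that this one-line conjugation by the scalar flow $e^{-tR}$ transports them to the general case. The only point deserving a small sanity check is that the invariant control set $C$ and the interior-point accessibility properties used in Lemmas \ref{contractionAtBoundedCost} and \ref{neighborhoodOfX} are intrinsic to $\ww{\Sigma}=\ww{\Sigma_R}$ and therefore survive the shift, which is immediate from the fact that the $-rId$ term acts as a scalar and projects to zero on the sphere.
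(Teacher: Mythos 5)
Your proposal is correct and follows essentially the same route as the paper: fix $x\in\pi^{-1}(\intr(C))$ with $\Vert x\Vert=1$, set $R=\xi(x)$, pass to the shifted system $\Sigma_R$ (which satisfies the hypothesis of the particular case since $\xi_R(x)=0$), obtain the nontrivial invariant compact set $D$ from Corollary \ref{corollaryNontrivial} and Proposition \ref{caseXiZero}, and transfer back via equality \ref{equalityr}. No issues.
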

\begin{proof}
From proposition \ref{caseXiZero}, there is a compact, nontrivial set $D\subset\mathbb{R}^d$ such that for all $t>0$
$$(\oo^R)_t(D)=D.$$
Using equality \ref{equalityr}, we have that
$$\oo_t(D)=e^{tR}D$$
for all $t>0$.
\end{proof}

The previous Theorem proves the existence of an eigenset $D$. In general, $D$ needs not to be the only set with this property (this is shown later in Example \ref{example1}).

In the next proposition we see how a point $x\in\mathbb{R}^n$ satisfying $R\in S_x$ can be used to construct an eigenset.

\begin{proposition}
\label{propConstruct}
    Assume that $\ww{\Sigma}$ is a accessible, let $R$ as previously and assume that there is $x\in\mathbb{R}^n\setminus\{0\}$ such that $R\in S_x$. Then $\overline{(\oo^R)^+(x)}$ is an eigenset.
\end{proposition}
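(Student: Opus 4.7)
The plan is to work in the shifted system $\Sigma_R$, where the hypothesis $R\in S_x$ is equivalent to the existence of $t_0>0$ and $u_0\in\mathcal{U}$ with $\varphi_R(t_0,x,u_0)=x$, i.e.\ a loop at $x$. By the identity \ref{equalityr}, the required eigenset equality $\oo_t(D)=e^{tR}D$ for $D=\overline{(\oo^R)^+(x)}$ is equivalent to $(\oo^R)_t(D)=D$ for all $t>0$, so this is what I would prove. Nontriviality of $D$ is immediate from $x\in D\setminus\{0\}$, and compactness follows because $\Sigma_R$ satisfies the hypothesis of Section \ref{sectionParticularCase}: any unit $x'\in\pi^{-1}(\intr(C))$ has $\xi_R(x')=0$ by the proposition preceding the theorem, so Proposition \ref{boundedOrbit} applied to $\Sigma_R$ gives boundedness of $(\oo^R)^+(S)$ and, by linearity, of $(\oo^R)^+(x)$.

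The inclusion $(\oo^R)_t(D)\subset D$ is routine: the cocycle property gives $(\oo^R)_t((\oo^R)^+(x))\subset(\oo^R)^+(x)\subset D$, and joint continuity of $\varphi_R$ lets this inclusion pass through the closure. More concretely, $(\oo^R)_t(D)$, being the image of the compact set $D\times\mathcal{U}$ under $\varphi_R(t,\cdot,\cdot)$, is already closed; approximating any $z\in D$ by points of $(\oo^R)^+(x)$ then places $\varphi_R(t,z,u)$ in $D$ for every $u\in\mathcal{U}$.

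The main obstacle is the reverse inclusion $D\subset(\oo^R)_t(D)$, which is where the loop at $x$ is essential. Given $t>0$, I would fix $n\in\mathbb{N}$ with $nt_0>t$, so that iterating $u_0$ periodically produces $\tilde u_0$ with $\varphi_R(nt_0,x,\tilde u_0)=x$. For any $y=\varphi_R(s,x,u)\in(\oo^R)^+(x)$, concatenating $\tilde u_0$ on $[0,nt_0]$ with $u$ on $[nt_0,nt_0+s]$ into a single control $w$ yields $\varphi_R(nt_0+s,x,w)=y$, and the cocycle property splits this as $y=\varphi_R(t,z,w')$ with $z=\varphi_R(nt_0+s-t,x,w)\in(\oo^R)^+(x)\subset D$. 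Thus $(\oo^R)^+(x)\subset(\oo^R)_t(D)$, and because $(\oo^R)_t(D)$ is closed, passing to closures gives $D\subset(\oo^R)_t(D)$. The remaining work is the bookkeeping of control concatenations and time-shifts; no new estimate is needed.
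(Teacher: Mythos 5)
Your proposal is correct and follows essentially the same route as the paper: both pass to the shifted system $\Sigma_R$, use the periodic loop $\varphi_R(kt_0,x,u_0)=x$ together with Proposition \ref{boundedOrbit} for compactness, obtain one inclusion from invariance and closedness, and obtain the reverse inclusion by prepending copies of the loop so that the cocycle property splits the trajectory at time $t$ through a point of $D$. No substantive difference from the paper's argument.
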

\begin{proof}
    Since $R\in S_x$, there are $u_0\in\mathcal{U}$ and $t_0>0$ such that
    $$\varphi(t_0,x,u_0)=e^{t_0R}x.$$
    We can assume, without loss in generality, that $u_0$ is cyclic with period $t_0$, and, then,
    $$\varphi(kt_0,x,u_0)=e^{kt_0R}x,$$
    or, equivalently,
    $$\varphi_R(kt_0,x,u_0)=x$$
    for all $k\in\mathbb{N}$.

    Let $\alpha=\Vert x\Vert$ and $S=\{x\in\mathbb{R}^n;\Vert x\Vert=1\}$ the unity sphere seen as a subset of $\mathbb{R}^n$. Note that
    $$(\oo^R)^+(x)\subset(\oo^R)^+(\alpha S)=\alpha(\oo^R)^+(S).$$
    From proposition \ref{boundedOrbit}, $(\oo^R)^+(S)$ is bounded, therefore $(\oo^R)^+(x)$ is bounded, and, consequently, $\overline{(\oo^R)^+(x)}$ is compact.

    Denote $D=\overline{(\oo^R)^+(x)}$, and let $t>0$ be arbitrary. $D$ is invariant by $\Sigma_R$, therefore,
    $$\oo_t(D)=e^{tR}((\oo^R)_t(D))\subset e^{tR}D.$$
    For the other inclusion, since $\oo_t(D)$ is closed it suffices to prove that $e^{tR}y\in \oo_t(D)$ for all $y\in(\oo^R)^+(x)$. Taking an arbitrary $y\in(\oo^R)^+(x)$, there are $u\in\mathcal{U}$ and $s>0$ such that
    $$y=\varphi_R(s,x,u).$$

    Note that, for all $k\in\mathbb{N}$ there is $v_k\in\mathcal{U}$ such that
    $$y=\varphi_R(s,x,u)=\varphi_R(s,\varphi_R(kt_0,x,u_0),u)=\varphi_R(s+kt_0,x,v_k),$$
    where $t_0$ and $u_0$ are as previously, and $v_k$ is defined by
    $$v_k(r)=\left\{\begin{array}{ll}
        u_0(r) & \text{ if } r\le kt_0\\
        u(r) & \text{ if } r>kt_0
    \end{array}\right.$$
    Then we can assume, without loss in generality, that $s>t$, since, otherwise, we could instead take $s+kt_0$ for some sufficiently large $k\in\mathbb{N}$. Therefore,
    $$y=\varphi_R(t,z,v),$$
    where $z=\varphi_R(s-t,x,u)\in D$ and $v$ is defined by $v(r)=u(r+s-t)$ for all $r\in\mathbb{R}$. Thus,
    $$y=e^{-tR}\varphi(t,z,v)$$
    $$\Rightarrow e^{tR}y=\varphi(t,z,v)\in\oo_t(D).$$
    Since $y\in(\oo^R)^+$ is arbitrary and $\oo_t(D)$ is closed, this shows that $e^{tR}D\subset\oo_t(D)$ and, therefore, $e^{tR}D=\oo_t(D)$.
\end{proof}
In general the set $D$ obtained in the previous proposition needs not to be a star set. The star shape can be retrieved by instead considering the set
$$[0,1]\times D=[0,1](\overline{(\oo^R)^+(x)})=\overline{(\oo^R)^+([0,1]x)}.$$

In the next example we see that the eigenset needs not to be unique.

\begin{example}
\label{example1}
    Let $\Sigma$ be the bilinear control system defined by
    $$\dot x(t)=Ax(t)+u(t)Bx(t)$$
    with $U=[-1,1]\subset\mathbb{R}$ and
    $$A=\begin{pmatrix}
        -1 & 1\\
        1 & 1
    \end{pmatrix},$$
    $$B=\begin{pmatrix}
        1 & 1\\
        1 & -1
    \end{pmatrix}.$$
    We can rewrite the system as
    $$\dot x(t)=M(u(t))x(t)$$
    where, for each $u_0\in[-1,1]$,
    $$M(u_0)=A+u_0B=\begin{pmatrix}
        -1+u_0 & 1+u_0\\
        1+u_0 & 1-u_0
    \end{pmatrix}.$$
    Calculating $M$ in the extreme points of the interval we have
    $$M(-1)=\begin{pmatrix}
        -2 & 0\\
        0 & 2
    \end{pmatrix},$$
    $$M(1)=\begin{pmatrix}
        0 & 2\\
        2 & 0
    \end{pmatrix},$$
    and all other values of $M$ are convex combinations of these two values. Any control $u\in\mathcal{U}$ can be approximated by a sequence of controls $u_n$ with images in $\{-1,1\}$ such that $\varphi_{u_n}^t\rightarrow\varphi_{u}^t$ for all $t\in\mathbb{R}$.

    The system above is accessible in $S^1$, thus it is in the hypothesis of Theorem \ref{mainTheorem}. We will construct a set with property \ref{eigenvectorLikeProperty}.

    We start by calculating $R$. In this example we are able to bound $\xi$ from above using the norm of the operators $M(t)$ for $t\in[-1,1]$. Consider the euclidean norm in $\mathbb{R}^2$:
    $$\Vert(x,y)\Vert=x^2+y^2$$
    and the operator norm in ${\rm gl}(2,\mathbb{R})$:
    $$\Vert T\Vert=\sup\{\Vert Tx\Vert;x\in\mathbb{R}^2\text{ and }\Vert x\Vert=1\}.$$
    Note that
    $$\Vert M(-1)\Vert=\Vert M(1)\Vert =2.$$
    Furthermore, for all $t\in(-1,1)$, $M(t)$ is a convex combination of $M(-1),M(1)$, thus $\Vert M(t)\Vert\le 2$. Now let $x\in\mathbb{R}^2\setminus\{0\}$ and $u\in\mathcal{U}$ be arbitrary. Consider the curve
    $$c:\mathbb{R}\rightarrow\mathbb{R}$$
    $$t\rightarrow \ln\Vert\varphi(t,x,u)\Vert.$$
    Note that
    $$\frac{d}{dt}c(t)=\frac{1}{\Vert\varphi(t,x,u)\Vert}\frac{\langle \varphi(t,x,u),M(u(t))\varphi(t,x,u)\rangle}{\Vert\varphi(t,x,u)\Vert}$$
$$\le \frac{\Vert\varphi(t,x,u)\Vert\Vert M(u(t))\varphi(t,x,u)\Vert}{\Vert\varphi(t,x,u)\Vert^2}\le\Vert M(u(t))\Vert\le 2.$$
    Thus, by the fundamental theorem of calculus,
    $$c(t)-c(0)\le 2t$$
    for all $t>0$. Therefore, for all $x\in\mathbb{R}^2-\{0\},u\in\mathcal{U}$ and $t>0$,
    $$\ln\left(\frac{\Vert\varphi(t,x,u)\Vert}{\Vert x\Vert}\right)\le 2t.$$
    Consequently, $\xi(x)\le 2$ for all $x\in\mathbb{R}^2-\{0\}$.
    
    On the other hand, considering the point $(1,1)$ and the control defined by $u(t)=1$ for all $t\in\mathbb{R}$, we have
    $$\varphi(t,(1,1),u)=(e^{2t},e^{2t})$$
    for all $t\in\mathbb{R}$. Thus, $\xi(1,1)=2=R$.

    From proposition \ref{propConstruct}, taking $C=\{(\alpha,\alpha);0\le\alpha\le 1\}$, the set $D=\overline{(\oo^2)^+(C)}$ is an eigenset. To calculate this set, note that $(\oo^2)^+(C)$ is limited by the semi-lines $\{(0,t);t\ge 0\}$ and $\{(t,t);t\ge 0\}$, as no vector field in $\Sigma_2$ points to the left, right of these lines, respectively. It is also bounded by the segment $\{(t,1);t\in[0,1]\}$, as no vector field points up in this segment. Thus, $\overline{(\oo^2)^+(C)}$ is contained in the triangle with vertices in $(0,0),(0,1)$ and $(1,1)$.

    On the other hand, taking the constant control defined by $u(t)=-1$ for all $t$, we have
    $$\varphi_2(t,(1,1),u)=\exp(t(M(-1)-2Id))(1,1)=\begin{pmatrix}
        e^{-4t} & 0\\
        0 & 1
    \end{pmatrix}\begin{pmatrix}
        1\\1
    \end{pmatrix}=(e^{-t4},1),$$
    thus, the semi-open segment $\{(t,1);0<t\le 1\}$ is contained in $(\oo^2)^+(C)$, and, since this set is star shaped, it contains the interior of the triangle with points in $(0,0),(0,1)$ and $(1,1)$. Therefore, $D$ coincides with this triangle.

    Now consider the point $(0,1)$. Note that
    $$\varphi(t,(0,1),M(-1))=e^{tM(-1)}(0,1)=e^{2t}(0,1),$$
    therefore, $R\in S_{(0,1)}$ and taking $C_2=\{(0,t);0\le t\le 1\}$ we can apply Proposition \ref{propConstruct} to get another set $D_2=\overline{(\oo^2)^+(C)}$ with property \ref{eigenvectorLikeProperty}. Using similar arguments as before, one shows that $D_2$ coincides with the triangle with vertices in $(0,0),(0,1),\left(\frac{1}{2},\frac{1}{2}\right)$, and is thus distinct from $D$.

    Furthermore, if $\alpha,\beta$ are real numbers, the set $(\alpha D)\cup(\beta D_2)$ will also satisfy property \ref{eigenvectorLikeProperty}, since, for all $t>0$,
    $$\oo_t\left((\alpha D)\cup(\beta D_2)\right)=(\alpha \oo_t(D))\cup(\beta\oo_t(D_2))$$
    $$=(e^{tR}\alpha D)\cup(e^{tR}\beta D_2)=e^{tR}\left((\alpha D)\cup(\beta D_2)\right).$$
    This yields an infinite family of eigensets.
\begin{figure}[h]
\hfill
\subfigure{\includegraphics[width=5cm]{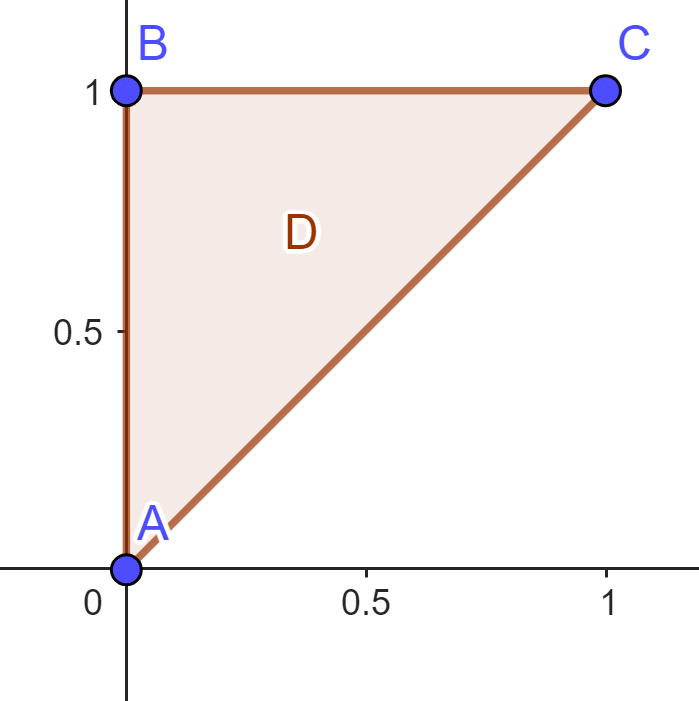}}
\hfill
\subfigure{\includegraphics[width=5cm]{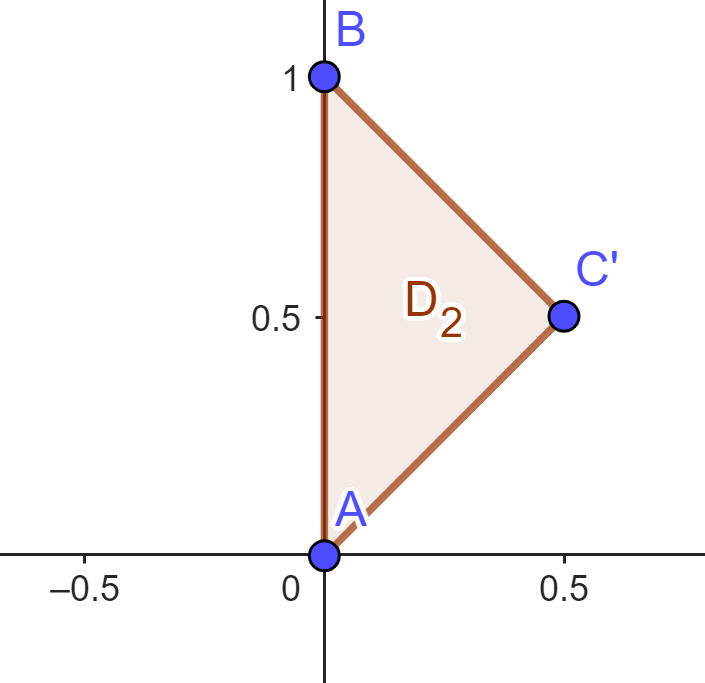}}
\hfill
\caption{The sets $D$ and $D_2$ from example \ref{example1}.}
\end{figure}
\end{example}

\pagebreak
\begin{example}
\label{example2}
    Let $\Sigma$ be the bilinear control system defined by
    $$\dot x(t)=Ax(t)+u(t)Bx(t)$$
    with $U=[-1,1]\subset\mathbb{R}$ and
    $$A=\begin{pmatrix}
        1 & -2\\
        2 & 1
    \end{pmatrix}=Id+\begin{pmatrix}
        0 & -2\\
        2 & 0
    \end{pmatrix},$$
    $$B=\begin{pmatrix}
        -1 & -2\\
        2 & -1
    \end{pmatrix}=-Id+\begin{pmatrix}
        0 & -2\\
        2 & 0
    \end{pmatrix}.$$
    Similar to the previous example, we define $M(u_0)=A+u_0B$ for all $u_0\in[-1,1]$. In this case we have
    $$M(u_0)=(1-u_0)Id+(1+u_0)\begin{pmatrix}
        0 & -2\\
        2 & 0
    \end{pmatrix},$$
    and one can prove that for all $u\in\mathcal{U}$ and $t\in\mathbb{R}$,
    \begin{equation}
        \label{solutionExample2}
        \varphi_u^t=e^{t-I_u(t)}R(2t+2I_u(t))
    \end{equation}
    where
    $$I_u(t)=\int_{0}^tu(s)ds$$
    and $R(r)$ denotes the counter clockwise rotation by $r$ radians. Thus, $R=2$ and every $x\in\mathbb{R}^2\setminus\{0\}$ satisfies $R\in S_x$. In particular, taking the point $(1,0)$ and $C=\{(t,0);0\le t\le 1\}$ we have by Proposition \ref{propConstruct} that $D=\overline{(\oo^2)^+(C)}$ satisfies property \ref{eigenvectorLikeProperty}. From equation \ref{solutionExample2} one sees that $(\oo^2)^+(C)$ is the set under the polar curve $\{e^{-t}(\cos(2t),\sin(2t)),0\le t\le \pi\}=\{e^{-\frac{1}{2}t}(\cos(t),\sin(t)),0\le t\le 2\pi\}$.
    
    \begin{figure}[h]
        \centering
        \includegraphics[width=5cm]{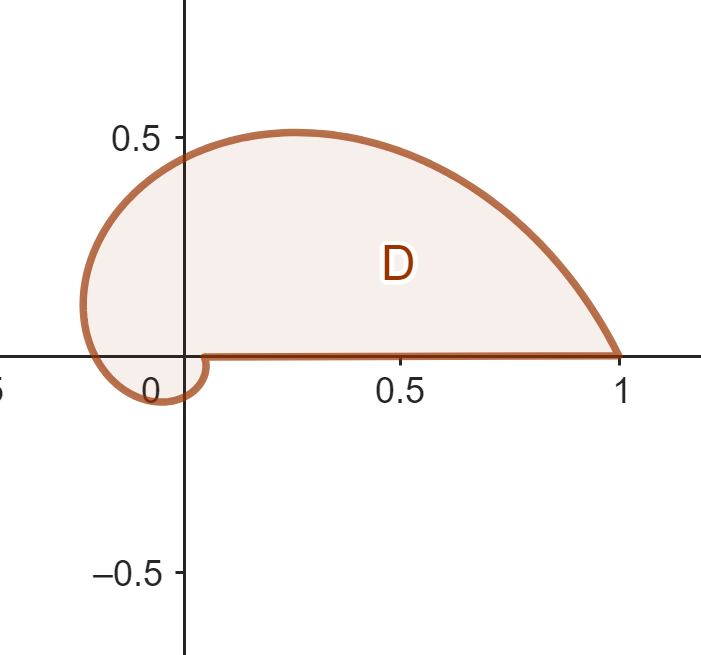}
        \caption{Set $D$ from example \ref{example2}}
    \end{figure}

\end{example}

\end{document}